\newtheorem{Theorem}{Theorem}[section]
\newtheorem{Lemma}[Theorem]{Lemma}
\newtheorem{Corollary}[Theorem]{Corollary}
\newtheorem{Proposition}[Theorem]{Proposition}
\newtheorem{Definition}[Theorem]{Definition}
\newtheorem{Remark}[Theorem]{Remark}
\def\V{\mbox{Var}}
\def\Z{{\mathbb Z}}
\def\R\re
\def\V{\bf V}
\def \re{{\mathbb R}}
\def \T{{\mathcal T}}
\def \C{{\mathbb C}}
\def \V{{\bf V}}
\def \A{{\mathbb A}}
\def \d{\mathbf d}
\begin{document}
\title[Transparent connections]{Transparent connections over negatively curved surfaces}

\author[G.P. Paternain]{Gabriel P. Paternain}
 \address{ Department of Pure Mathematics and Mathematical Statistics,
University of Cambridge,
Cambridge CB3 0WB, UK}
 \email {g.p.paternain@dpmms.cam.ac.uk}




\begin{abstract} Let $(M,g)$ be a closed oriented negatively curved surface. A unitary connection
on a Hermitian vector bundle over $M$ is said to be transparent if its parallel transport along the closed
geodesics of $g$ is the identity. We study the space of such connections modulo gauge and we prove
a classification result in terms of the solutions of certain PDE that arises
naturally in the problem. We also show a local uniqueness result for
the trivial connection and that there
is a transparent $SU(2)$-connection associated to each meromorphic function
on $M$.

\end{abstract}

\maketitle

\section{Introduction}
Let $(M,g)$ be a closed Riemannian manifold and let $E\to M$ be a Hermitian vector bundle of rank $n$ over $M$.
A unitary connection $\nabla$ on $E$ is said to be {\it transparent} if its parallel transport along every closed
geodesic of $g$ is the identity. These connections are ``ghosts" or ``invisible" from the point of view
of the closed geodesics of $g$.
Clearly, if $\nabla$ is transparent any other connection gauge equivalent to
it will also be transparent. The goal of the present paper is the study of transparent connections modulo gauge
transformations when $(M,g)$ is a closed oriented negatively curved surface.

The motivation for studying this problem comes from several a priori unrelated quarters. Transparent connections
on $S^2$ (and $\re \mathbb P^2$) arise in a natural way in the theory of integrable systems and solitons
when studying the Bogomolny equations $D\Phi=\star F$ in $(2+1)$-dimensional Minkowski space \cite{Wa1,Wa2}.
Here $\Phi$ is the Higgs field, $F$ is the curvature of the connection, $\star$ is the Hodge star operator
of the metric and $D$ is the induced connection on the endomorphism bundle.
The condition of having trivial holonomy along the closed geodesics of a compactified
space-like plane picks up finite dimensional families of solutions
and enables the use of methods from twistor theory over a compact twistor space.
\cite{A}. In fact, using a more refined twistor correspondence, L. Mason
has recently classified all transparent connections on $S^2$ and $\re \mathbb P^2$ with the standard round metric \cite{Ma1}.
For the case of $S^2$, his results say that the space of transparent connections modulo gauge
is in 1-1 correspondence with holomorphic vector bundles $W\to (\mathbb C\mathbb P^2)^*$ and positive definite
Hermitian metrics on $W$ restricted to the real slice $(\mathbb R\mathbb P^2)^*$. Similar results are obtained
for anti-self-dual Yang-Mills connections over $S^2\times S^2$ with split signature, see \cite{Ma}.

The problem of determining a connection from its parallel transport along geodesics is a natural integral-geometry problem
that can be considered also in the case of manifolds with boundary or $\re^d$ with appropriate decay
conditions at infinity. It arises for example when one considers the wave equation associated to the
Schr\"odinger equation with external Yang-Mills potential $A$ and the inverse problem of determining
the potential $A$ from the Dirichlet-to-Neumann map $\Lambda_{A}$. 
There are various results known for the integral-geometry problem. Local uniqueness theorems under various
assumptions on the connection or its curvature are proved by V.A. Sharafutdinov \cite{Sha}, R. Novikov \cite{No}
and D. Finch and G. Uhlmann \cite{FU}. A global uniqueness result for connections with compact support
is proved by G. Eskin in \cite{E}. In the case of $\re^2$, Novikov shows (building on the work of Ward previously 
mentioned) that global uniqueness may fail and in fact, his construction gives non-trivial transparent
connections over $\re \mathbb P^2$. He also shows global uniqueness (with
reconstruction) for $d\geq 3$ without assuming compact support.

As we mentioned before, in the present paper we will discuss transparent connections when the metric
is negatively curved, or more generally, when its geodesic flow is Anosov.
While our main focus here will be in the non-abelian case, we should mention that the abelian
case $n=1$ is also of interest, but it can be reduced to known results (see Theorem \ref{thm:abelian}) to
obtain that transparent connections, when they exist, are unique up to gauge equivalence.
The abelian case arises also when discussing positivity of entropy production 
in dissipative geodesic flows or thermostats \cite{DP}, thus showing that the problem of understanding transparent
connections also pops up naturally in dynamical systems and non-equilibrium statistical mechanics.

Our first result (Theorem \ref{thm:livtop}) asserts that not all bundles over a surface of genus \texttt{g}
carry transparent connections. In fact we show that a complex vector bundle $E$ over $M$ admits a
transparent connection if and only if $2-2\texttt{g}$ divides its first Chern class $c_{1}(E)$.
This result, and subsequent ones, are based on the classical Livsic theorem for non-abelian cocycles
which is recalled in Section \ref{sec:livsic}.

One of the obvious differences with the abelian case is the appearance of the following ghosts.
Let $K$ be the canonical line bundle and $K^s$ with $s\in \Z$ be its tensor powers (if $s=0$ we get the trivial bundle). The powers $K^s$ for $s\neq 0$ carry the Levi-Civita connection
which we denote by $\nabla_{\ell}^{s}$. If $s=0$ we understand that
this is the trivial connection. Note that the Levi-Civita connection
on $TM$ ($=K^{-1}$) is transparent, since the parallel transport 
along
a closed geodesic $\gamma$ must fix $\dot{\gamma}(0)$ and consequently any vector orthogonal
to it since the parallel transport is an isometry and the surface is
 orientable. Thus any $\nabla_{\ell}^{s}$ is transparent.
Given an $n$-tuple of integers $S:=(s_{1},\dots,s_{n})$, the connection
\[\nabla_{\ell}^{S}:=\nabla_{\ell}^{s_{1}}\oplus\cdots \oplus\nabla_{\ell}^{s_{n}}\]
defines a transparent unitary connection on the bundle
$E_{S}:=K^{s_{1}}\oplus\cdots\oplus K^{s_{n}}$.
Clearly $c_{1}(E_{S})=(2\texttt{g}-2)(s_{1}+\dots+s_{n})$ and any complex vector bundle $E$
supporting a transparent connection is isomorphic to $E_S$ for $S$ such that
$c_{1}(E)=c_{1}(E_{S})$.

Now let $E$ be a Hermitian vector bundle and consider a unitary isomorphism
$\tau:E\to E_{S}$, where $S$ is such that $c_{1}(E)=c_{1}(E_{S})$.
The unitary connection $\tau^*\nabla_{\ell}^S$ is a transparent
connection on $E$ and its gauge equivalence class, denoted by $[S]$, is independent
of $\tau$. Note that $[S_1]=[S_2]$ if and only if
$S_1$ and $S_2$ coincide up to a permutation. 
However, as we shall see below, these will not be the only ghosts.

Given two transparent connections $\nabla^1$ and $\nabla^2$ write $\nabla^2=\nabla^1+A$
where $A\in \Omega^{1}(M,\mbox{\rm ad}\,E)$. Let $\pi: SM\to M$ be the unit circle bundle
and $X$ the vector field of the geodesic flow of the metric. The Livsic theorem
will provide solutions $u\in \Omega^{0}(SM,\mbox{\rm Aut}\,\pi^*E)$ of
$D_{X}u+Au=0$, where $D$ is the connection on the bundle of endomorphisms of $\pi^*E$ induced
by $\nabla^1$. Inspired by the methods in \cite{GK} we will show that these solutions
must have a {\it finite} Fourier expansion (cf. Theorem \ref{thm:deg}), in other words, $u$
must be a polynomial in the velocities. The degree of this polynomial
will allow us to define a distance function on the set of transparent connections modulo
gauge. As a consequence of these results we will derive the following local uniqueness statement
for the trivial connection on the trivial bundle:

\medskip

\noindent{\bf Theorem A}. {\it Consider the trivial bundle and let $\nabla$ be a transparent
connection with curvature $F_{\nabla}$. Let $K<0$ be the Gaussian curvature of the surface
and suppose that the Hermitian matrix $\pm i\star F_{\nabla}(x)-K(x)\,\mbox{\rm Id}$
is positive definite for all $x\in M$. Then $\nabla$ is gauge equivalent
to the trivial connection.}

\medskip

Thus, if a transparent connection has small enough curvature, it must be gauge equivalent
to the trivial connection. Note that Theorem A is sharp, since a ghost $\nabla\in [S]$
with $\sum_{k=1}^{n} s_{k}=0$, $s_{k}\in \{0,\pm 1\}$ and $S\neq 0$ has
$\pm i\star F_{\nabla}(x)-K(x)\,\mbox{\rm Id}$ positive semi-definite for all $x\in M$.

Our second main result is a classification of the set $\T$ of transparent connections on $E$ modulo gauge in terms of the solutions of certain
non-linear PDE that arises naturally in the problem.
In order to describe this PDE, recall that the unit sphere bundle $SM$ of $M$
has a canonical frame $\{X,H,V\}$ where $X$ is the geodesic vector field, $V$ is the vertical vector
field and $H=[V,X]$ is the horizontal vector field. 
Let $f:SM\to  \mathfrak{u}(n)$ be a smooth
function, where $\mathfrak{u}(n)$ denotes the set of all $n\times n$ skew-Hermitian matrices.
Consider the PDE:
\begin{equation}
H(f)+VX(f)=[X(f),f].
\label{eq:keypde}
\end{equation}
Observe that the set $\mathcal H$ of solutions to (\ref{eq:keypde}) is invariant under the action of $U(n)$ given by $f\mapsto q^{-1}fq$, where $q\in U(n)$.

We shall say that two functions $f,h:SM\to \mathfrak u(n)$ are
{\it $V$-cohomologous} if there exists a smooth function $u:SM\to U(n)$
such that $f=u^{-1}V(u)+u^{-1}h u$.

Given a constant matrix $c\in \mathfrak u(n)$ with $e^{2\pi c}=\mbox{\rm Id}$
we consider the $U(n)$-invariant
subset $\mathcal H_{c}\subset \mathcal H$ given by those solutions $f$
which are $V$-cohomologous to $c$. The set $\mathcal H_{c}$ only
depends on $\mbox{\rm tr}(c)$ (see Lemma \ref{lemma:traces}).

\medskip

\noindent{\bf Theorem B}. {\it Let $E$ be a Hermitian vector bundle over a closed oriented surface of genus} $\texttt{g}$ {\it whose geodesic flow is Anosov. Suppose
that} $2-2\texttt{g}$ {\it divides $c_{1}(E)$ and let $c\in \mathfrak u(n)$ be a constant matrix with $e^{2\pi c}=\mbox{\rm Id}$ and} $c_{1}(E)=(2\texttt{g}-2)\,\mbox{\rm tr}(ic)$. {\it Then, there is a 1-1 correspondence between $\T$ and
$\mathcal H_{c}/U(n)$.}

\medskip

In the abelian case $n=1$, it is not hard to see that the only solutions to
(\ref{eq:keypde}) are the constants provided $K<0$.
 This can be shown using the energy estimates method (the Pestov identity)
which also gives some information about (\ref{eq:keypde}) for $n\geq 2$. This is discussed
at the end of the paper, Subsection \ref{sub:last}.

For $n\geq 2$, the constant solutions in $\mathcal H_{c}$ correspond precisely
to the Levi-Civita ghosts $[S]$, but as we mentioned before, there are other
ghosts besides $[S]$ and these have to come from non-constants solutions
to (\ref{eq:keypde}). To see that this is the case we consider
functions $f$ which only depend on the base point $x$. Under such assumption,
it is easy to see that (\ref{eq:keypde}) turns into $2\star df=[df,f]$,
which only depends on the conformal class of the metric $g$.
We discuss this equation in Subsection \ref{sub:sol} for the case of
$SU(2)$ and we show that its non-zero solutions correspond precisely
with the set of holomorphic maps $f:M\to \mathbb C\mathbb P^1$. We
also show that all these maps are $V$-cohomologous to the zero matrix.
In this way, via Theorem B, we virtually obtain as many $SU(2)$-transparent
connections on the trivial bundle (modulo gauge) as meromorphic functions on $M$; these are
all the transparent connections at distance one from the trivial connection
(cf. Corollary \ref{cor:last} for the precise statement).

There are several questions which are worthy of further consideration.
In particular, it would be interesting to exhibit elements in $\mathcal H_c$ which have
dependence on the velocities. It seems that one can deal with this issue using an appropriate
B\"acklund transformation, but we leave it as the subject of a future paper.
The inclusion of a Higgs field
$\Phi$ and the problem of understanding transparent pairs
$(\nabla,\Phi)$ is also of interest, but it will also be discussed elsewhere.

\medskip

{\it Acknowledgements:} I am very grateful to N. Dairbekov, M. Dunajski, L. Mason, 
R. Novikov, V. Sharafutdinov, R. Spatzier and G. Uhlmann for several useful
comments and discussions related to this paper.
I am also greateful to the referee for several comments and corrections
that improved the presentation.

\section{The Livsic theorem for non-abelian cocycles}
\label{sec:livsic}
Let $X$ be a closed manifold and $\phi_t:X\to X$ a smooth
transitive Anosov flow.  Recall that $\phi_t$
is Anosov if there
is a continuous splitting
$TX=E^0\oplus E^{u}\oplus E^{s}$, where $E^0$ is the flow direction, and 
there are constants $C>0$ and $0<\rho<1<\eta$ such that 
for all $t>0$ we have
\[\|d\phi_{-t}|_{E^{u}}\|\leq C\,\eta^{-t}\;\;\;\;\mbox{\rm
and}\;\;\;\|d\phi_{t}|_{E^{s}}\|\leq C\,\rho^{t}.\]
It is very well known that the geodesic flow of a closed
negatively curved Riemannian manifold is a transitive Anosov
flow.

Let $G$ be a compact Lie group; for the purposes
of this paper it is enough to think of $G$ as $U(n)$.

\begin{Definition} A $G$-valued cocycle over the flow $\phi_t$ is a map
$C:X\times\re\to G$ that satisfies
\[C(x,t+s)=C(\phi_{t}x,s)\,C(x,t)\]
for all $x\in X$ and $s,t\in\re$.
\label{def1}
\end{Definition}

In this paper the cocycles will always be smooth. In this case $C$ is
determined by its infinitesimal generator $A:X\to\mathfrak g$ given by
\[A(x):=\left.\frac{d}{dt}\right|_{t=0}C(x,t).\]
The cocycle can be recovered from $A$ as the unique solution to
\[\frac{d}{dt}C(x,t)=dR_{C(x,t)}(A(\phi_{t}x)),\;\;\;C(x,0)=\mbox{\rm Id},\]
where $R_{g}$ is right translation by $g\in G$.

\begin{Theorem}[The smooth Livsic periodic data theorem]
Let $C$ be a smooth cocycle such that $C(x,T)=\mbox{\rm Id}$ whenever
$\phi_{T}x=x$. Then, there exists a smooth function $u:X\to G$ such that
\[C(x,t)=u(\phi_{t}x)u(x)^{-1}.\]
\label{livsic}
\end{Theorem}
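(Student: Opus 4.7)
The plan is to construct $u$ first along a single dense orbit, extend it to all of $X$ by uniform continuity using the Anosov closing lemma, and finally bootstrap regularity up to $C^{\infty}$.

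Since $\phi_t$ is transitive Anosov, pick a point $x_0 \in X$ whose orbit is dense in $X$. Set $u(x_0) := \mbox{\rm Id}$ and define
\[u(\phi_t x_0) := C(x_0, t)\]
for all $t \in \re$ (this is consistent because the orbit of $x_0$ is not periodic). The cocycle identity gives
\[C(\phi_s x_0, t)\, u(\phi_s x_0) = C(\phi_s x_0, t)\, C(x_0, s) = C(x_0, s + t) = u(\phi_{s+t} x_0),\]
so the relation $C(x,t) = u(\phi_t x)\, u(x)^{-1}$ holds all along the orbit of $x_0$.

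The central step is to prove uniform continuity of $u$ on this orbit with respect to the ambient distance on $X$. Suppose $y = \phi_s x_0$ and $z = \phi_t x_0$ are close in $X$. The Anosov closing lemma produces a periodic point $p$ of period $T$ close to $t - s$ whose orbit segment $\{\phi_\tau p\}_{0 \le \tau \le T}$ shadows the segment from $y$ to $z$ with exponentially small error. Comparing $C(y, t-s)$ with $C(p, T) = \mbox{\rm Id}$ via the Hölder dependence of $C$ on the base point yields an estimate of the form
\[\|C(y, t-s) - \mbox{\rm Id}\|_{G} \le K\, d(y,z)^{\alpha}\]
for some $\alpha > 0$ independent of $y,z$, where $d$ is a Riemannian distance on $X$. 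Right-multiplying by $u(y)$ and using the compactness of $G$ then gives $\|u(z) - u(y)\|_{G} \le K' d(y,z)^{\alpha}$. Density of the orbit of $x_0$ now extends $u$ to a Hölder continuous map $X \to G$ that satisfies $C(x,t) = u(\phi_t x)\, u(x)^{-1}$ everywhere on $X$.

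For the smoothness upgrade one uses the transport equation implicit in the cocycle identity. Differentiating in $t$ gives $X(u) = A\, u$ with $A$ smooth, so $u$ is automatically smooth along every flow orbit. For the transverse directions one works along the strong stable and unstable foliations of $\phi_t$. If $y$ lies on the strong stable leaf through $x$, then $\phi_t y$ and $\phi_t x$ converge exponentially as $t \to +\infty$, and the identity $u(\phi_t y)\, u(\phi_t x)^{-1} = C(y,t)\, u(y)\, u(x)^{-1}\, C(x,t)^{-1}$ combined with smoothness of $C$ along stable holonomy forces $u$ to be smooth along $W^s$ with uniform estimates; the time-reversed argument handles $W^u$. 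Joint smoothness in the three directions $\re X$, $W^s$, $W^u$ then follows from the Journé regularity lemma.

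The main obstacle is the Hölder estimate at the closing-lemma step together with the subsequent smooth bootstrap along the weak foliations; this is standard but non-trivial technology (Livsic for the continuous case, de la Llave--Marco--Moriyón for smoothness). Compactness of $G = U(n)$ is crucial to keep cocycle values bounded when comparing $C(y, t-s)$ with $C(p,T)$ and to control $\|u(z)u(y)^{-1} - \mbox{\rm Id}\|_{G}$, so the passage from the abelian Livsic theorem to the non-abelian statement above entails no essential new difficulty beyond using multiplicative rather than additive cocycle estimates.
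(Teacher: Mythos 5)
The paper does not prove Theorem~\ref{livsic}: it simply cites Livsic \cite{L1,L2} for the existence of a H\"older continuous transfer function $u$ and Ni\c tic\u a--T\"or\"ok \cite{NT} for the upgrade to smoothness, and these results are imported as known technology. Your sketch reproduces, correctly in outline, exactly the strategy those references follow: define $u$ by integrating the cocycle along a dense orbit through a base point $x_0$ with $u(x_0)=\mathrm{Id}$, use the Anosov closing lemma together with the periodic data assumption $C(p,T)=\mathrm{Id}$ to obtain a H\"older modulus of continuity for $u$ on that orbit, extend by density, and then bootstrap regularity along stable and unstable leaves, gluing the three directions with Journ\'e's lemma.

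One place where you compress the hardest step: the H\"older estimate ``$\|C(y,t-s)-\mathrm{Id}\| \le K\,d(y,z)^\alpha$'' is not a direct consequence of H\"older continuity of $C$ in the base point, because $t-s$ may be arbitrarily large and the naive estimate degrades with the length of the orbit segment. The actual argument splits the long segment at a point where the shadowing periodic orbit is closest, and then sums a geometric series of errors using the exponential convergence of the shadowed and shadowing orbits in forward and backward time together with the multiplicative cocycle property; compactness of $G$ is what keeps this summation under control (as you note). This telescoping is the real content of Livsic's theorem and deserves to be spelled out rather than asserted. Similarly, passing from ``$u$ is smooth along individual stable leaves'' to ``$u$ is smooth along $W^s$ with uniform estimates on all derivatives'' requires showing that the differentiated version of $u(y)u(x)^{-1}=\lim_{t\to\infty} C(y,t)^{-1}C(x,t)$ converges with control, which is the technical heart of \cite{NT}; citing it is entirely appropriate, since the paper itself does exactly that.

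In short: the paper offers no proof and defers entirely to the literature; your sketch is a faithful, if somewhat compressed, account of the argument in that literature, and the two do not diverge in approach.
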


The existence of a H\"older continuous function $u$ (assuming $A$
is H\"older) was proved by Livsic \cite{L1,L2}.
Smoothness of $u$ was proved by Ni\c tic\u a and T\"or\"ok \cite{NT}.

In our applications we will need to consider non trivial vector bundles. 
Suppose $E$ is a rank $n$ Hermitian vector bundle over $X$ and $\phi_t:X\to X$ is as above,
a smooth transitive Anosov flow.

\begin{Definition} A cocycle over $\phi_t$ is an action of $\re$
by bundle automorphisms which covers $\phi_t$. In other words, for each
$(x,t)\in X\times \re$, we have a unitary map $C(x,t):E_{x}\to E_{\phi_{t}x}$
such that $C(x,t+s)=C(\phi_{t}x,s)\,C(x,t)$.
\end{Definition}

If $E$ admits a unitary trivialization $f:E\to X\times \C^n$, then
\[f\,C(x,t)\,f^{-1}(x,a)=(\phi_{t}x,D(x,t)a),\] where $D:X\times\re\to U(n)$
is a cocycle as in Definition \ref{def1}.

Let $E^*$ denote the dual vector bundle to $E$. If $E$ carries
a Hermitian metric $h$, we have a conjugate isomorphism $\ell_h: E\to E^*$, which induces a Hermitian metric $h^*$ on $E^*$. Given a cocycle
$C$ on $E$, $C^*:=\ell_{h}\,C\,\ell_{h}^{-1}$ is a cocycle on
$(E^*,h^*)$.

\begin{Proposition} Let $E$ be a Hermitian vector bundle over $X$ such
that $E\oplus E^*$ is a trivial vector bundle. Let $C$ be a smooth
cocycle on $E$ such that $C(x,T)=\mbox{\rm Id}$ whenever
$\phi_T x=x$. Then $E$ is a trivial vector bundle.
\label{trivial}
\end{Proposition}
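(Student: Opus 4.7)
The plan is to apply Theorem~\ref{livsic} to a $U(2n)$-valued cocycle constructed from $C\oplus C^{*}$ on the trivial bundle $E\oplus E^{*}$, and then extract enough $C$-invariant sections of $E$ to trivialize it.

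First I would fix a unitary trivialization $f\colon E\oplus E^{*}\to X\times\C^{2n}$; through $f$ the cocycle $C\oplus C^{*}$ becomes a smooth $U(2n)$-valued cocycle $D(x,t)$ in the sense of Definition~\ref{def1}. Because $C(x,T)=\mbox{\rm Id}$ whenever $\phi_{T}x=x$ and $C^{*}=\ell_{h}C\ell_{h}^{-1}$ inherits the same periodic data fiberwise, $D$ satisfies $D(x,T)=\mbox{\rm Id}$ at every periodic point. Theorem~\ref{livsic} then supplies a smooth $v\colon X\to U(2n)$ with $D(x,t)=v(\phi_{t}x)\,v(x)^{-1}$, and the sections $\sigma_{j}(x):=f^{-1}(x,v(x)e_{j})$, $1\leq j\leq 2n$, form a pointwise unitary basis of $E\oplus E^{*}$ which is invariant under $C\oplus C^{*}$, that is $(C\oplus C^{*})(x,t)\,\sigma_{j}(x)=\sigma_{j}(\phi_{t}x)$.

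Next I would decompose $\sigma_{j}=(a_{j},b_{j})$ with $a_{j}$ a smooth section of $E$ and $b_{j}$ a smooth section of $E^{*}$. Since the direct-sum splitting is preserved by $C\oplus C^{*}$, the invariance of $\sigma_{j}$ forces $C(x,t)\,a_{j}(x)=a_{j}(\phi_{t}x)$, so every $a_{j}$ is a smooth $C$-invariant section of $E$; projecting the basis $\sigma_{j}(x)$ onto the first summand shows that $a_{1}(x),\dots,a_{2n}(x)$ span $E_{x}$ at every $x\in X$. Let $V$ be the complex vector space of smooth $C$-invariant sections of $E$. Pick $x_{0}\in X$ with dense $\phi_{t}$-orbit (such a point exists by transitivity). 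If $s\in V$ satisfies $s(x_{0})=0$, then invariance gives $s(\phi_{t}x_{0})=C(x_{0},t)s(x_{0})=0$ for every $t$, and continuity yields $s\equiv 0$; hence evaluation at $x_{0}$ embeds $V$ into $E_{x_{0}}$ and $\dim V\leq n$. The $a_{j}$ meanwhile show evaluation $V\to E_{x}$ is surjective at every $x$, so $\dim V\geq n$. Therefore $\dim V=n$ and the evaluation map is an isomorphism at every point, so any basis of $V$ is a smooth global frame of $E$.

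The main obstacle is the very first step. Theorem~\ref{livsic} is stated only for maps into a Lie group, and there is no way to invoke it directly on a cocycle living on an arbitrary Hermitian bundle. The triviality of $E\oplus E^{*}$ is precisely what permits the reduction to a $U(2n)$-valued cocycle; once that is done, the Livsic coboundary automatically respects the $E\oplus E^{*}$ decomposition and the remainder is a short dimension count built on transitivity of the flow.
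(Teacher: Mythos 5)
Your proof is correct and follows the same overall strategy as the paper: trivialize $E\oplus E^{*}$, push the cocycle $C\oplus C^{*}$ through this trivialization to a $U(2n)$-valued cocycle, invoke the Livsic theorem to obtain a smooth trivializing map $v$, and then use transitivity of the flow. The only genuine difference is in how the final step is packaged. The paper fixes a unitary frame of $E_{x_0}$ at a point with dense orbit, normalizes $v(x_{0})=\mbox{\rm Id}$, transports the frame by $v$ to obtain a smooth $n$-frame of $E_{x}\oplus E_{x}^{*}$ for every $x$, and shows this frame actually lands in $E_{x}$ along the dense orbit and hence everywhere by continuity. You instead take all $2n$ invariant sections $\sigma_{j}$, project them to $E$-components $a_{j}$, and run a dimension count on the space $V$ of smooth $C$-invariant sections: surjectivity of evaluation at each point comes from the $a_{j}$ spanning $E_{x}$, injectivity of evaluation at $x_{0}$ comes from density of the orbit, and together they force $\dim V=n$ with evaluation an isomorphism everywhere. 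Both use exactly the same structural facts (the cocycle preserves the $E\oplus E^{*}$ splitting, and a point with dense orbit pins everything down), so this is a clean reorganization rather than a different method; if anything, your version dispenses with the normalization $u(x_{0})=\mbox{\rm Id}$ and avoids having to argue directly that a propagated vector remains in the subbundle.
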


\begin{proof} As explained above, the cocycle $C$ on $E$ induces
a cocycle $C^*$ on $E^*$.
On the trivial vector bundle $E\oplus E^*$ we consider the cocycle
$C\oplus C^*$. Clearly $C\oplus C^*(x,T)=\mbox{\rm Id}$ everytime
that $\phi_T x=x$. Choose a unitary trivialization
$f:E\oplus E^*\to X\times \C^{2n}$ and write
\[f\,C\oplus C^*(x,t)\,f^{-1}(x,a)=(\phi_{t}x,D(x,t)a).\]
By Theorem \ref{livsic}, there exists a smooth function
$u:X\to U(2n)$ such that $D(x,t)=u(\phi_{t}x)u^{-1}(x)$.
Since $\phi_t$ is a transitive flow, we may choose $x_0\in X$
with a dense orbit and without loss of generality we may suppose
that $u(x_0)=\mbox{\rm Id}$. Let 
$$\{e_{1}(x_0),\dots,e_{n}(x_{0})\}$$
be a unitary frame at $E_{x_{0}}$.
Write $f(x_0,e_{i}(x_{0}))=(x_{0},a_{i})$, where $a_{i}\in \C^{2n}$ and let
\[e_{i}(x):=f^{-1}(x,u(x)a_{i}).\]
Clearly at every $x\in X$, $\{e_{1}(x),\dots,e_{n}(x)\}$ is a smooth unitary
$n$-frame of $E_{x}\oplus E_{x}^{*}$.
We claim that in fact $e_{i}(x)\in E_x$ for all $x\in X$. This, of course,
implies the triviality of $E$. Note that
\[e_{i}(\phi_{t}x_{0})=f^{-1}(\phi_{t}x_{0},u(\phi_{t}x_{0})a_{i})=
f^{-1}(\phi_{t}x_{0},D(x_{0},t)a_{i})=C\oplus C^{*}(x_{0},t)e_{i}(x_{0}).\]
But $e_{i}(x_{0})\in E_{x_{0}}$, thus $e_{i}(\phi_{t}x_{0})\in E_{\phi_{t}x_{0}}$. It follows that $e_{i}(x)\in E_{x}$ for a dense set of points in $X$.
By continuity of $e_{i}$, $e_{i}(x)\in E_{x}$ for all $x\in X$.

\end{proof}

\begin{Remark}{\rm The hypothesis of $E\oplus E^*$ being trivial is not needed in Proposition \ref{trivial}.
Ralf Spatzier has informed the author that it is possible to adapt the proof of the
usual Livsic periodic data theorem to show directly that $E$ is trivial. However, this weaker version
is all that we will need in this paper.

A proof of the measurable Livsic theorem for bundles (which we do not use here) may be found in
\cite{GS}.

}
\end{Remark}

\section{Transparent connections and the Livsic theorem}
\label{tliv}
Let $M^d$ be a closed orientable Riemannian manifold whose geodesic flow $\phi_t$ 
is Anosov. The geodesic flow acts on the unit sphere bundle $SM$ and we let
$\pi:SM\to M$ be the footpoint projection.

Let $E\to M$ be a Hermitian vector bundle of rank $n$ and let $\nabla$
be a unitary connection on $E$. Given a geodesic $\gamma:\re\to M$
with initial conditions $(x,v)\in SM$, we can consider the parallel
transport of $\nabla$ along $\gamma$. The parallel
transport $P_{x,\gamma(t)}:E_{x}\to E_{\gamma(t)}$ is an isometry and defines
a smooth cocycle $C$ over the geodesic flow on the Hermitian vector bundle
over $SM$ given by the pull-back bundle $\pi^* E$.
The connection $\nabla$ is transparent if and only if $C(x,v,T)=\mbox{\rm Id}$
every time that $\phi_{T}(x,v)=(x,v)$.

\subsection{Arbitrary bundles over an Anosov surface} Suppose $d=2$.
In this case, complex vector bundles $E$ over $M$ are classified topologically
by the first Chern class $c_{1}(E)\in H^{2}(M,\Z)=\Z$. Since
$c_{1}(E^*)=-c_{1}(E)$ and $c_1$ is additive with respect to direct sums,
we see that $E\oplus E^*$ is the trivial bundle and therefore we will be able
to apply Proposition \ref{trivial}. In fact we will show:

\begin{Theorem} Let $M$ be a closed orientable surface of genus 
$\mbox{\tt g}$
whose geodesic flow is Anosov. A complex vector bundle $E$ over $M$ admits a
transparent connection if and only if $2-2\tt{g}$ divides $c_{1}(E)$.
\label{thm:livtop}
\end{Theorem}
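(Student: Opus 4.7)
The statement is an ``if and only if'', so I would treat the two directions separately, with the existence direction being essentially a construction and the necessity direction being the substantive part.

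For the sufficiency direction, suppose $c_1(E)$ is divisible by $2-2\texttt{g}$. Then we can pick any $n$-tuple of integers $S=(s_1,\dots,s_n)$ with $\sum s_i = c_1(E)/(2\texttt{g}-2)$, form the bundle $E_S=K^{s_1}\oplus\cdots\oplus K^{s_n}$, and observe that $c_1(E_S)=c_1(E)$. Since complex vector bundles over a surface are classified by rank and first Chern class, $E\cong E_S$ as Hermitian bundles (up to the usual choice of metric), and the direct sum of Levi-Civita connections $\nabla_{\ell}^S$ on $E_S$ is transparent by the discussion in the introduction. Pulling back under a unitary isomorphism yields a transparent connection on $E$.

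For the necessity direction, suppose $\nabla$ is a transparent connection on $E$. Its parallel transport along geodesics defines a smooth cocycle $C$ on $\pi^*E\to SM$ over the geodesic flow, and transparency is exactly the statement that $C(x,v,T)=\mbox{\rm Id}$ whenever $\phi_T(x,v)=(x,v)$. Since $E\oplus E^*$ is trivial over $M$ (as $c_1(E^*)=-c_1(E)$ and $c_1$ classifies rank-$2n$ complex bundles over a surface), the pullback $\pi^*(E\oplus E^*)=\pi^*E\oplus \pi^*E^*$ is trivial over $SM$. Proposition \ref{trivial} then applies to the cocycle $C$ on $\pi^*E$, yielding that $\pi^*E$ is trivial as a complex vector bundle.

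It remains to translate triviality of $\pi^*E$ into the divisibility assertion. Here $\pi:SM\to M$ is the unit circle bundle of $TM$, a principal $SO(2)$-bundle whose Euler class equals the Euler characteristic $\chi(M)=2-2\texttt{g}\in H^2(M,\Z)=\Z$. The Gysin sequence of this circle bundle reads
\[ H^0(M,\Z)\xrightarrow{\cup e} H^2(M,\Z)\xrightarrow{\pi^*} H^2(SM,\Z), \]
so $\ker \pi^*=(2-2\texttt{g})\Z$. Triviality of $\pi^*E$ forces $\pi^*c_1(E)=c_1(\pi^*E)=0$, and therefore $c_1(E)\in (2-2\texttt{g})\Z$, as required.

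The main obstacle here is the necessity direction: everything hinges on pushing the transparency hypothesis through Proposition \ref{trivial}, which in turn rests on the non-abelian Livsic theorem. The Gysin step at the end is a routine topological computation, and the sufficiency direction is essentially a bookkeeping exercise using the Levi-Civita ghosts already identified.
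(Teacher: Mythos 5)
Your proposal is correct and follows essentially the same route as the paper: necessity via Proposition \ref{trivial} plus the Gysin sequence of the unit circle bundle, and sufficiency via the Levi-Civita connections on (sums of) tensor powers of the canonical bundle. The paper packages the sufficiency slightly differently, using $K^s\oplus\varepsilon$ instead of $K^{s_1}\oplus\cdots\oplus K^{s_n}$, but this is cosmetic.
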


\begin{proof} Suppose $E$ admits a transparent connection. As explained above
we may apply Proposition \ref{trivial} to deduce that $\pi^*E$ is
a trivial bundle and since $c_{1}(\pi^*E)=\pi^* c_{1}(E)$ we conclude that
$\pi^*c_{1}(E)=0$. Consider now the Gysin sequence of the unit circle
bundle $\pi:SM\to M$,
\[0\to H^{1}(M,\Z)\stackrel{\pi^*}{\to}H^{1}(SM,\Z)\stackrel{0}{\to}
H^{0}(M,\Z)\stackrel{\times(2-2\texttt{g})}{\longrightarrow} H^{2}(M,\Z)\stackrel{\pi^*}{\to}
H^{2}(SM,\Z)\to\cdots.                 \]
We see that $\pi^*c_{1}(E)=0$ if and only if $c_1(E)$ is in the image
of the map $H^0(M,\Z)\to H^{2}(M,\Z)$ given by cup product with the
Euler class of the unit circle bundle. Equivalently, $2-2\texttt{g}$ must divide
$c_1(E)$.

Let $K$ be the canonical line bundle of $M$. We can think of $K$ as the cotangent bundle to $M$; it has $c_{1}(K)=2\texttt{g}-2$. The tensor powers $K^s$ of $K$ (positive and negative) generate all possible
 line bundles with first Chern class
divisible by $2-2\texttt{g}$ and they all carry the unitary connection induced
by the Levi-Civita connection of the Riemannian metric on $M$. 
All these connections are clearly transparent. Topologically, all complex
vector bundles over $M$ whose whose first Chern class
is divisible by $2-2\texttt{g}$ are of the form $K^s\oplus \varepsilon$,
where $\varepsilon$ is the trivial vector bundle. Since the trivial
connection on the trivial bundle is obviously transparent,
it follows that every complex vector bundle whose first Chern class
is divisible by $2-2\texttt{g}$ admits a transparent connection.

\end{proof}

\subsection{Arbitrary bundles over an Anosov 3-manifold} Suppose $M$ is a closed 3-manifold
whose geodesic flow is Anosov. Complex vector bundles $E$ over $M$ are also classified
topologically by $c_{1}(E)\in H^{2}(M,\Z)$, hence as in the two dimensional case,
$E\oplus E^{*}$ is the trivial bundle. Thus if $E$ admits a transparent connection,
Proposition \ref{trivial} implies that $\pi^{*}E$ is trivial. However now the argument
with the Gysin sequence that we explained in the proof of Theorem \ref{thm:livtop} shows that
$\pi^{*}:H^{2}(M,\Z)\to H^{2}(SM,\Z)$ is injective and thus $c_{1}(E)=0$.
Therefore if $E$ admits a transparent connection, it must be trivial.
This shows that the problem for 3-manifolds is in some sense simpler than the problem
for surfaces, at least, there are no obvious transparent connections besides the trivial one.

\subsection{The abelian case} The goal of this subsection is to show the following result.

\begin{Theorem} Let $M$ be a closed orientable Riemannian manifold whose geodesic flow
is Anosov and let $E$ be a Hermitian line bundle over $M$. 
Then, any two transparent connections are gauge equivalent.
\label{thm:abelian}
\end{Theorem}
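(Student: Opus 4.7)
The plan is to reduce to the trivial line bundle, apply the smooth Livsic theorem to obtain a $U(1)$-valued transfer function on $SM$, and then invoke the $s$-injectivity of the geodesic X-ray transform on 1-forms, a known result for Anosov flows, to conclude that the difference 1-form is exact up to a closed 1-form with integral periods and therefore gauge-trivial.

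Given two transparent connections $\nabla^1,\nabla^2$ on the line bundle $E$, write $\nabla^2-\nabla^1=i\alpha$ for a real 1-form $\alpha$ on $M$. Since $E\otimes E^*$ is canonically trivial and its induced connection is $d+i\alpha$ on the trivial bundle, with identity holonomy around every closed geodesic, it suffices to show that a transparent $d+i\alpha$ on the trivial line bundle is gauge equivalent to $d$. Transparency says $\int_\gamma\alpha\in 2\pi\mathbb{Z}$ for every closed geodesic $\gamma$. Setting $f:=\iota_X\pi^*\alpha\in C^\infty(SM,\mathbb{R})$, the $U(1)$-cocycle $C(x,t):=\exp\!\bigl(-i\int_0^t f\circ\phi_s\,ds\bigr)$ is trivial on every periodic orbit, so Theorem \ref{livsic} applied with $G=U(1)$ produces a smooth $u:SM\to U(1)$ satisfying $X(u)=-ifu$.

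The heart of the argument is to descend $u$ to $M$. By the Gysin sequence of the sphere bundle $\pi:SM\to M$, the map $\pi^*:H^1(M,\mathbb{Z})\to H^1(SM,\mathbb{Z})$ is an isomorphism in every Anosov case (either $\dim M\geq 3$, or $M$ is a closed orientable surface of genus at least two), so there exists $g_0:M\to U(1)$ with $u$ homotopic to $\pi^*g_0$. Writing $g_0=e^{i\phi_0}$ and setting $\alpha':=\alpha+d\phi_0$, the modified 1-form still satisfies $\int_\gamma\alpha'\in 2\pi\mathbb{Z}$ (since $d\phi_0$ is closed with $2\pi\mathbb{Z}$-periods), and the corresponding new Livsic solution is $u(\pi^*g_0)^{-1}$, which is now null-homotopic and therefore lifts to a smooth $\tilde u:SM\to\mathbb{R}$ satisfying $X(\tilde u)=-\iota_X\pi^*\alpha'$. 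The $s$-injectivity of the geodesic X-ray transform on 1-forms (Guillemin--Kazhdan and Croke--Sharafutdinov in dimension two, Dairbekov--Sharafutdinov in higher dimension) now forces $\alpha'=dh$ for some $h\in C^\infty(M,\mathbb{R})$; equivalently $\alpha=d(h-\phi_0)$, and the gauge transformation $g:=e^{ih}g_0^{-1}:M\to U(1)$ carries $d+i\alpha$ to $d$.

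The only non-formal step, and the main obstacle, is the analytic $s$-injectivity input; the remaining arguments are formal (reduction to the trivial bundle and application of the Livsic theorem) or topological (the homotopy lifting via Gysin).
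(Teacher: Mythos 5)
Your proof is correct and follows essentially the same route as the paper's: Livsic produces a transfer function $u:SM\to S^1$, the Gysin sequence identifies $H^1(M)$ with $H^1(SM)$ so the obstruction descends to the base, and the known solenoidal injectivity of the geodesic X-ray transform on 1-forms (Guillemin--Kazhdan, Croke--Sharafutdinov, Dairbekov--Sharafutdinov) closes the argument. The only cosmetic difference is that the paper works with the real-valued closed 1-form $du/(iu)$ on $SM$ and the Gysin isomorphism with $\re$-coefficients, then checks integrality of periods at the end, whereas you work with $\Z$-coefficients (homotopy classes of maps to $U(1)$) from the outset and absorb $-ig_0^{-1}dg_0$ into $\alpha$ before invoking rigidity; the two bookkeepings are interchangeable.
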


\begin{proof} Let $\nabla^1$ and $\nabla^2$ be transparent connections. We may write
$\nabla^2=\nabla^1+A$, where $A\in \Omega^{1}(M,\mbox{\rm ad}\,E)$.
Since $E$ is a line bundle, $A=i\theta$, where $\theta$ is a real-valued
1-form in $M$. Since $\nabla^1$ and $\nabla^2$ are transparent,
\begin{equation}
\int_{\gamma}\theta\in 2\pi\,\Z
\label{trans}
\end{equation}
for every closed geodesic $\gamma$.
Consider the cocycle over $\phi_t$, $C:SM\times\re\to S^1$ defined as follows.
Given $(x,v)\in SM$, let $\gamma:\re\to M$ be the unique geodesic with initial
conditions $(x,v)$. Set
\[C(x,v,t):=\exp\left(i\int_{0}^t\theta_{\gamma(s)}(\dot{\gamma}(s))\,ds \right).\]
By (\ref{trans}), the cocycle $C$ has the property that $C(x,v,T)=1$, every time
that $\phi_T(x,v)=(x,v)$, thus by Theorem \ref{livsic}, there is a smooth
function $u:SM\to S^1$ such that $C(x,v,t)=u(\phi_{t}(x,v))u^{-1}(x,v)$.
If we differentiate this equality with respect to $t$ and set $t=0$ we obtain
\begin{equation}
ui\theta=du(X),
\label{ec}
\end{equation}
where $X$ is the geodesic vector field.
The function $u$ gives rise to a real-valued closed 1-form in $SM$ given by
$\varphi:=\frac{du}{iu}$. Since $\pi^*:H^1(M,\re)\to H^{1}(SM,\re)$
is an isomorphism (this follows easily from the Gysin sequence, since
$M$ cannot be the 2-torus), there exists a closed 1-form $\omega$ in
$M$ and a smooth function $f:SM\to\re$ such that
\[\varphi=\pi^*\omega+df.\]
When this equality is applied to $X$ and combined with (\ref{ec}) one obtains
\[\theta_{x}(v)-\omega_{x}(v)=df(X(x,v))\]
for all $(x,v)\in SM$. This cohomological equation is actually equivalent
-via the classic Livsic theorem for $\re$-values cocycles- to saying that
\[\int_{\gamma}\theta-\omega=0\]
for every closed geodesic $\gamma$. It is known that this implies
that $\theta-\omega$ is exact. This was proved by V. Guillemin and D. Kazhdan
\cite{GK} for surfaces of negative curvature, by C. Croke and Sharafutdinov
\cite{CS} for arbitrary manifolds of negative curvature and by N.S. Dairbekov and
Sharafutdinov \cite{DS} for manifolds whose geodesic flows is Anosov.

If $\theta-\omega$ is exact, $\theta$ must be closed and by (\ref{trans}),
$[\theta]/2\pi\in H^{1}(M,\Z)$. Thus there exists a smooth function
$g:M\to S^1$ such that $\theta=dg/ig$. This is precisely the statement
that $\nabla^1$ and $\nabla^2$ are gauge equivalent.

\end{proof}

\section{Setting up the Fourier analysis}

Let $E$ be a complex Hermitian vector bundle over $M$ and let $\nabla$
be a unitary connection on $E$. If $\pi:SM\to M$ denotes the canonical 
projection, then $\nabla$ induces a unitary connection
on the pull-back bundle $\pi^*E$ that we denote by $\pi^*\nabla$.
This pull-back connection induces in turn a unitary connection
on the bundle $\mbox{\rm End}\,\pi^*E$ of complex endomorphisms of $\pi^*E$,
which we denote by $D$. Note that $\mbox{\rm End}\,\pi^*E$ naturally
inherits a Hermitian metric determined by the trace $\mbox{\rm tr}(u\,w^*)$
where $u,w\in \Omega^{0}(SM,\mbox{\rm End}\,\pi^*E)$. This Hermitian metric
together with the Liouville measure $\mu$ of $SM$ combine to give
an $L^2$-inner product of sections
\[\langle u,w^*\rangle=\int_{SM} \mbox{\rm tr}(u\,w^*)\,d\mu.\]

Let $F_{\nabla}\in \Omega^2(M,\mbox{\rm ad}\,E)$ be the curvature
of $\nabla$. Then $F_{\pi^*\nabla}\in \Omega^2(SM,\mbox{\rm ad}\,\pi^*E)$
is given by $F_{\pi^*\nabla}=\pi^*F_{\nabla}$ and
 $F_{D}\in \Omega^2(SM,\mbox{\rm ad}\,\mbox{\rm End}\,\pi^*E)$
is given by $F_{D}=[F_{\pi*\nabla},\,\cdot]$.
Note that if $\star$ denotes the Hodge star operator of the metric, then
$\star F_{\nabla}\in \Omega^{0}(M,\mbox{\rm ad}\,E)$.

The vertical vector field $V$ and the connection $D$ induce a first order differential operator
$$D_{V}:\Omega^{0}(SM,\mbox{\rm End}\,\pi^*E)\to
\Omega^{0}(SM,\mbox{\rm End}\,\pi^*E)$$
which in fact is independent of $\nabla$, for if $\nabla'$ is another connection and we write $\nabla'=\nabla+A$, then $D'=D+[\pi^{*}A,\cdot]$ and
$D'_{V}=D_{V}$ since $\pi^*A(V)=0$.

Note that $-iD_{V}$ is self-adjoint, since $V$ preserves the Lioville measure $\mu$.
Indeed, observe that
the compatibility of $D$ with the Hermitian metric implies
$V\langle u,w\rangle=\langle D_{V}u,w\rangle+\langle u,D_{V}w\rangle$.
Since the integral of $V\langle u,w\rangle$ with respect to $\mu$ vanishes,
$(D_{V})^*=-D_{V}$.
We also have an orthogonal decomposition
$$L^{2}(SM,\mbox{\rm End}\,\pi^*E)=\bigoplus _{n\in\Z}H_{n}$$
such that $-iD_{V}$ acts as $n\,\mbox{\rm Id}$ on $H_n$. To see this, triangulate $M$ in such a way
that both $SM\to M$ and $E\to M$ are trivial over each face $M_r$ of the triangulation.
Since $L^{2}(SM,\mbox{\rm End}\,\pi^*E)$ is isomorphic to $\oplus_{r}L^{2}(SM_{r},\mbox{\rm End}\,\pi^*E)$
we are reduced to the case of both bundles being trivial in which case the claim is clear because
$D_{V}u=V(u)$, where $u$ is a matrix valued function on $M_{r}\times S^1$.

 Following Guillemin and
Kazhdan in \cite{GK} we introduce the following first order differential operators 
$$\eta_{+},\eta_{-}:\Omega^{0}(SM,\mbox{\rm End}\,\pi^*E)\to
\Omega^{0}(SM,\mbox{\rm End}\,\pi^*E)$$ given by

\[\eta_{+}:=\frac{D_{X}-i\,D_{H}}{2}\]
\[\eta_{-}:=\frac{D_{X}+i\,D_{H}}{2},\]
where $H=[V,X]$. We recall the other two structure equations of the Riemannian metric:
$X=-[V,H]$ and $[X,H]=KV$, where $K$ is the Gaussian curvature of the surface.

The next lemma describes the commutation relations between these
operators.

\begin{Lemma} We have
\begin{align*}
[-iD_{V},\eta_{+}]&=\eta_{+},\\
[-iD_{V},\eta_{-}]&=-\eta_{-},\\
[\eta_{+},\eta_{-}]&=\frac{i}{2}\left(K\,D_{V}+[\star F_{\nabla},\,\cdot]\right).
\end{align*}
\label{commeta}
\end{Lemma}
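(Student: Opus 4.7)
The plan is to derive all three identities directly from the structure equations $[V,X]=H$, $[V,H]=-X$, $[X,H]=KV$ on $SM$, together with the general commutation formula for covariant derivatives:
\[[D_{Y},D_{Z}]u=D_{[Y,Z]}u+[F_{\pi^{*}\nabla}(Y,Z),u],\qquad u\in\Omega^{0}(SM,\mathrm{End}\,\pi^{*}E),\]
where the curvature of the induced connection on the endomorphism bundle acts by the bracket.

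First I would record the geometric fact that $F_{\pi^{*}\nabla}=\pi^{*}F_{\nabla}$ vanishes on the vertical direction: since $d\pi(V)=0$, we have $F_{\pi^{*}\nabla}(V,\cdot)\equiv 0$. From the commutation formula this gives the curvature-free identities
\[[D_{V},D_{X}]=D_{H},\qquad [D_{V},D_{H}]=-D_{X}.\]
Expanding $\eta_{\pm}=(D_{X}\mp iD_{H})/2$ and using these, a one-line calculation yields
\[[-iD_{V},\eta_{+}]=\tfrac{1}{2}(-iD_{H}+D_{X})=\eta_{+},\]
and symmetrically $[-iD_{V},\eta_{-}]=-\eta_{-}$, establishing the first two commutation relations as purely formal consequences of the first step.

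For the third identity, applying the commutation formula to $(X,H)$ and invoking $[X,H]=KV$ produces
\[[D_{X},D_{H}]=KD_{V}+[\pi^{*}F_{\nabla}(X,H),\cdot].\]
The subtle step is the identification $\pi^{*}F_{\nabla}(X,H)=\star F_{\nabla}\circ\pi$. At any $(x,v)\in SM$, $d\pi(X(x,v))=v$ and $d\pi(H(x,v))$ form a positively oriented orthonormal basis of $T_{x}M$ (this is built into the definition $H=[V,X]$ together with the orientation of $M$), so for any $\mathrm{ad}\,E$-valued $2$-form $\alpha$ on $M$ one has $\alpha(d\pi X,d\pi H)=\star\alpha$ at $x$; specializing to $\alpha=F_{\nabla}$ gives the claim. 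Finally, expanding
\[[\eta_{+},\eta_{-}]=\tfrac{1}{4}[D_{X}-iD_{H},\,D_{X}+iD_{H}]=\tfrac{i}{2}[D_{X},D_{H}]\]
and substituting the previous formula yields $[\eta_{+},\eta_{-}]=\tfrac{i}{2}(KD_{V}+[\star F_{\nabla},\cdot])$, as required.

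The main obstacle is the geometric identification of $\pi^{*}F_{\nabla}(X,H)$ with $\star F_{\nabla}$; the rest is a formal manipulation using the structure equations, the triviality of the pull-back curvature on $V$, and the bilinear commutation formula for covariant derivatives. No analytic input (Livsic, Anosov, etc.) is needed at this stage.
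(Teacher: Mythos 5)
Your proof is correct and follows essentially the same route as the paper: both use the curvature commutation formula $[D_U,D_W]-D_{[U,W]}=F_D(U,W)$, the vanishing of the pulled-back curvature in the vertical direction to get $[D_V,D_X]=D_H$ and $[D_V,D_H]=-D_X$, and then the structure equation $[X,H]=KV$ together with $F_D(X,H)=[\star F_{\nabla},\cdot]$ to obtain the third identity. The only difference is that you spell out the geometric justification of $\pi^{*}F_{\nabla}(X,H)=\star F_{\nabla}\circ\pi$ (via $(d\pi X,d\pi H)$ being a positively oriented orthonormal frame), which the paper leaves implicit.
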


\begin{proof} In order to prove the lemma we need the following
general preliminary observation: if $U$ and $W$ are vector fields
in $SM$ then
\begin{equation}
F_{D}(U,W)=[D_{U},D_{W}]-D_{[U,W]}.
\label{curva}
\end{equation}
As noted before 
\begin{equation}
F_{D}=[\pi^*F_{\nabla},\,\cdot].
\label{culift}
\end{equation}
Thus for any vector field $U$, $F_{D}(V,U)=0$ ($V$ is vertical) and hence
$[D_{V},D_{U}]=D_{[V,U]}$. If we now take $U=X,H$ and we use
the commutation relations $[V,X]=H$ and $[V,H]=-X$ we obtain
$[D_{V},D_{X}]=D_{H}$ and $[D_{V},D_{H}]=-D_{X}$. The first
two commutation relations in the lemma follow easily from this.
To prove the third relation note that
$2[\eta_{+},\eta_{-}]=i[D_{X},D_{H}]$. Using
(\ref{curva}) and (\ref{culift}) together with $[X,H]=K\,V$
we see that $[D_{X},D_{H}]=F_{D}(X,H)+K\,D_{V}=
[\star F_{\nabla},\cdot]+K\,D_{V}$ and the third commutation relation follows.

\end{proof}

Let us set $\Omega_{n}:=H_{n}\cap \Omega^{0}(SM,\mbox{\rm End}\,\pi^*E) $.
The first two commutation relations in the lemma imply
that $\eta_{+}:\Omega_{n}\to \Omega_{n+1}$ and
$\eta_{-}:\Omega_{n}\to \Omega_{n-1}$. It also follows
easily from the fact that $X$ and $H$ preserve
$\mu$ and the definitions 
that $\eta_{+}^{*}=-\eta_{-}$ and $\eta_{-}^{*}=-\eta_{+}$.
Indeed, like $D_{V}$, $D_{X}$ and $D_{H}$ are skew-Hermitian since both
$X$ and $H$ preserve the Liouville measure $\mu$.

\subsection{Modified operators} We will now modify the operators
$\eta_{+}$ and $\eta_{-}$ to suit our purposes.
Consider a second unitary connection $\nabla^{0}$ and
write $\nabla^0=\nabla+A$, where $A\in \Omega^{1}(M,\mbox{\rm ad}\,E)$.
We may regard $A$ and $\star A$ as elements of $\Omega^{0}(SM,\mbox{\rm ad}\,\pi^*E)$ and if we 
do so, then $D_{V}A=\star A$ since $(D_{V}A)(x,v)=A(x,iv)=(\star A)(x,v)$. This certainly implies that $D_{V}^{2}A=-A$.
Having this in mind, we decompose $A$ as $A=A_{-1}+A_{1}$ where
\[A_{1}:=\frac{A-iD_{V}A}{2}\in \Omega_{1},\;\;\;A_{-1}:=\frac{A+iD_{V}A}{2}\in \Omega_{-1}.\]
Observe that this decomposition corresponds precisely with the usual decomposition of 1-forms
on a surface:
\[\Omega^{1}(M,\mbox{\rm ad}\,E)\otimes \C=\Omega^{1,0}(M,\mbox{\rm ad}\,E)\oplus \Omega^{0,1}(M,\mbox{\rm ad}\,E),\]
given by the eigenvalues $\pm i$ of the Hodge star operator.

We now set $\mu_{+}:=\eta_{+}+A_{1}$ and $\mu_{-}:=\eta_{-}+A_{-1}$.
It is straightforward to check that $\mu_{+}:\Omega_{n}\to \Omega_{n+1}$ and
$\mu_{-}:\Omega_{n}\to \Omega_{n-1}$. It is also easy to check that
$\mu_{+}^{*}=-\mu_{-}$ and $\mu_{-}^{*}=-\mu_{+}$.

We will need the following auxiliary lemma.

\begin{Lemma} The following relation holds
\[\frac{i}{2}\star (\nabla A+A\wedge A)=\eta_{+}(A_{-1})-\eta_{-}(A_{1})
+A_{1}A_{-1}-A_{-1}A_{1}.\]

\label{auxiliar}
\end{Lemma}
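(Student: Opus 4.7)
The plan is to expand the right-hand side into derivatives of $A$ and $\star A$ along the frame $\{X,V,H\}$, and then identify the result with $\star(\nabla A + A\wedge A)$ by evaluating 2-forms on $M$ at the oriented orthonormal basis $\{v,iv\}$ of $T_xM$ supplied by each point $(x,v)\in SM$. First I would combine the $\eta$-terms: since $A_{-1}+A_{1}=A$ and $A_{-1}-A_{1} = iD_{V}A = i\star A$, the definitions of $\eta_{\pm}$ give
\[
\eta_{+}(A_{-1})-\eta_{-}(A_{1}) = \tfrac{1}{2}\bigl(D_{X}(A_{-1}-A_{1}) - iD_{H}(A_{-1}+A_{1})\bigr) = \tfrac{i}{2}\bigl(D_{X}(\star A) - D_{H}A\bigr),
\]
and a short bracket expansion yields $A_{1}A_{-1}-A_{-1}A_{1} = \tfrac{i}{2}[A,\star A]$. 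Thus the right-hand side equals $\tfrac{i}{2}\bigl(D_{X}(\star A) - D_{H}A + [A,\star A]\bigr)$, and the lemma reduces to proving
\[
\star(\nabla A + A\wedge A) = D_{X}(\star A) - D_{H}A + [A,\star A]
\]
as functions on $SM$.

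The commutator piece is immediate: evaluating the $\mathrm{ad}\,E$-valued 2-form $A\wedge A$ at the oriented orthonormal pair $(v,iv)$ gives $\star(A\wedge A)(x) = [A(v),A(iv)] = [A,\star A](x,v)$. For the $\nabla A$ piece I would first prove the auxiliary identity $D_{X}\alpha(x,v) = (\nabla_{v}\alpha)(v)$ valid for any $\mathrm{ad}\,E$-valued 1-form $\alpha$ regarded as a function on $SM$; this follows by differentiating $\alpha_{\gamma(t)}(\dot\gamma(t))$ along the geodesic $\gamma$ with $\dot\gamma(0)=v$ in a local trivialization, where the Levi-Civita and $\nabla$ contributions combine into the full covariant derivative. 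Applied to $\alpha = \star A$, and using that $iv$ is parallel along geodesics, this gives $D_{X}(\star A)(x,v) = (\nabla_{v}A)(iv)$. For $D_{H}A$ I would invoke $F_{D}(V,\cdot)=0$ (since $F_{D} = [\pi^{*}F_{\nabla},\cdot]$ and $F_{\nabla}$ is pulled back from $M$), which together with $[V,X]=H$ yields $D_{H} = [D_{V},D_{X}]$, exactly the step already used in Lemma \ref{commeta}. Differentiating the fiber function $v\mapsto (\nabla A)_{x}(v,v)$ in the vertical direction gives $D_{V}D_{X}A(x,v) = (\nabla_{iv}A)(v) + (\nabla_{v}A)(iv)$, while $D_{X}D_{V}A = D_{X}(\star A) = (\nabla_{v}A)(iv)$, so $D_{H}A(x,v) = (\nabla_{iv}A)(v)$. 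Subtracting, $D_{X}(\star A) - D_{H}A = (\nabla_{v}A)(iv) - (\nabla_{iv}A)(v)$, which is precisely the covariant exterior derivative $\nabla A$ evaluated on the oriented orthonormal pair $(v,iv)$, i.e.\ $\star(\nabla A)(x)$; this closes the identity.

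The main obstacle is bookkeeping: reconciling the paper's sign convention $(\star A)(v)=A(iv)$, distinguishing the $(0,2)$-tensor form of $\nabla A$ used in the $SM$-computation from the 2-form (covariant exterior derivative) form appearing on the left-hand side, and verifying that the auxiliary identity $D_{X}\alpha(x,v) = (\nabla_{v}\alpha)(v)$ correctly combines the Levi-Civita contribution arising from rotating $v$ along the geodesic with the $\nabla$-contribution coming from the pullback connection form $\pi^{*}A_{0}(X)=A_{0}(v)$.
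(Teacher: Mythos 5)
Your argument is correct and follows essentially the same route as the paper: the two algebraic reductions $\eta_{+}(A_{-1})-\eta_{-}(A_{1})=\tfrac{i}{2}(D_{X}(\star A)-D_{H}A)$ and $A_{1}A_{-1}-A_{-1}A_{1}=\tfrac{i}{2}[A,\star A]$ are exactly the paper's, as is the identification $\star(A\wedge A)=[A,\star A]$. The only variation is in identifying $\star(\nabla A)$: the paper plugs $\pi^{*}A$ directly into the covariant-exterior-derivative formula evaluated at $(X,H)$ (using $[X,H]=KV$ and $\pi^{*}A(V)=0$), whereas you recover the same intermediate quantity $D_{X}(\star A)-D_{H}A$ pointwise via $D_{H}=[D_{V},D_{X}]$ and a Leibniz computation in the vertical direction --- a presentational rather than a structural difference.
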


\begin{proof}Using the definitions we derive
\[A_{1}A_{-1}-A_{-1}A_{1}=\frac{i}{2}(AD_{V}(A)-D_{V}(A)A),\]
\[\eta_{+}(A_{-1})-\eta_{-}(A_{1})=\frac{i}{2}(D_{X}D_{V}A-D_{H}A).\]
But it is easy to check that
\[\star(A\wedge A)=AD_{V}(A)-D_{V}(A)A,\]
and since
\begin{align*}
\star(\nabla A)&=D_{X}(\pi^*A)(X,H)\\
&=D_{X}(\pi^*A(H))-D_{H}(\pi^*A(X))-\pi^*A([X,H])\\
&=D_{X}D_{V}A-D_{H}A,
\end{align*}
the lemma is proved.

\end{proof}

The next lemma describes the commutator $[\mu_{+},\mu_{-}]$.

\begin{Lemma} Given $u\in  \Omega^{0}(SM,\mbox{\rm End}\,\pi^*E)$ we have
\[[\mu_{+},\mu_{-}]u=\frac{i}{2}\left(K\,D_{V}u+(\star F_{\nabla^{0}})\,u-u\,
(\star F_{\nabla})\right).\]

\end{Lemma}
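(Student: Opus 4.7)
The plan is to expand the commutator
\[
[\mu_+,\mu_-] = [\eta_+ + A_1,\ \eta_- + A_{-1}]
\]
by bilinearity into four pieces and evaluate each one, then recombine using the two lemmas already proved (Lemma \ref{commeta} and Lemma \ref{auxiliar}) together with the standard formula for the curvature of a perturbed connection.

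First I would handle the two cross terms. Since $A_1, A_{-1}$ act on $u\in\Omega^0(SM,\mathrm{End}\,\pi^*E)$ by pointwise multiplication, while $\eta_\pm$ are built from the covariant derivative $D$ which obeys Leibniz, a direct computation gives
\[
[\eta_+, A_{-1}]\,u = \eta_+(A_{-1})\,u, \qquad [A_1,\eta_-]\,u = -\eta_-(A_1)\,u.
\]
The pure $A$-piece is simply the matrix commutator $[A_1,A_{-1}] = A_1 A_{-1} - A_{-1} A_1$, and the pure $\eta$-piece is already evaluated in Lemma \ref{commeta}:
\[
[\eta_+,\eta_-]\,u = \tfrac{i}{2}\bigl(K\,D_V u + [\star F_\nabla,u]\bigr).
\]

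Next, I would combine the three remaining pieces using Lemma \ref{auxiliar}, which gives precisely
\[
\eta_+(A_{-1}) - \eta_-(A_1) + A_1 A_{-1} - A_{-1} A_1 = \tfrac{i}{2}\,\star(\nabla A + A\wedge A).
\]
The key identification is that, since $\nabla^0 = \nabla + A$, the curvature obeys $F_{\nabla^0} = F_\nabla + \nabla A + A\wedge A$, so this sum equals $\tfrac{i}{2}(\star F_{\nabla^0} - \star F_\nabla)$, viewed as a pointwise-multiplication operator on the left of $u$.

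Finally I would assemble everything:
\[
[\mu_+,\mu_-]\,u = \tfrac{i}{2}\bigl(K\,D_V u + [\star F_\nabla,u]\bigr) + \tfrac{i}{2}\bigl(\star F_{\nabla^0} - \star F_\nabla\bigr)\,u,
\]
and observe that the bracket $[\star F_\nabla, u] = (\star F_\nabla)\,u - u\,(\star F_\nabla)$ combines with the $(\star F_{\nabla^0} - \star F_\nabla)\,u$ term so that the $(\star F_\nabla)\,u$ contributions cancel, leaving the claimed right-hand side $\tfrac{i}{2}(K\,D_V u + (\star F_{\nabla^0})\,u - u\,(\star F_\nabla))$. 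There is no real obstacle here beyond bookkeeping; the subtlety worth flagging is the asymmetry between $\star F_{\nabla^0}$ acting on the left and $\star F_\nabla$ acting on the right of $u$, which reflects the fact that $D_X$ (hidden in $\mu_\pm$ through $\eta_\pm$) is built from $\nabla$ while the perturbation $A_1, A_{-1}$ shifts the left-factor connection to $\nabla^0$.
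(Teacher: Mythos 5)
Your argument is correct and matches the paper's proof essentially line by line: expand $[\mu_+,\mu_-]$ by bilinearity, note that the cross terms reduce (by Leibniz) to multiplication by $\eta_+(A_{-1})-\eta_-(A_1)$, invoke Lemma \ref{commeta} for $[\eta_+,\eta_-]$ and Lemma \ref{auxiliar} for the remaining pieces, then apply $F_{\nabla^0}=F_\nabla+\nabla A+A\wedge A$ and simplify. The cancellation of the $(\star F_\nabla)u$ terms you flag at the end is exactly the final step in the paper.
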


\begin{proof} Using the definitions we compute
\[[\mu_{+},\mu_{-}]u=[\eta_{+},\eta_{-}]u+(\eta_{+}(A_{-1})-\eta_{-}(A_{1}))u
+(A_{1}A_{-1}-A_{-1}A_{1})u.\]
Using Lemmas \ref{commeta} and \ref{auxiliar} we obtain
\[[\mu_{+},\mu_{-}]u=\frac{i}{2}\left(K\,D_{V}u+[\star F_{\nabla},u]\right)
+\frac{i}{2}\star(\nabla A+A\wedge A)u.\]
The lemma is now a consequence of the fact that
$F_{\nabla^{0}}=F_{\nabla}+\nabla A +A\wedge A$.

\end{proof}

Let $|\cdot|$ stand for the $L^2$-norm in
$\Omega^{0}(SM,\mbox{\rm End}\,\pi^*E)$.

\begin{Corollary}  Given $u\in  \Omega^{0}(SM,\mbox{\rm End}\,\pi^*E)$ we have
\[|\mu_{+}u|^2=|\mu_{-}u|^2+\frac{i}{2}\left(\langle K\,D_{V}u,u\rangle
+\langle (\star F_{\nabla^{0}})u,u\rangle-\langle u(\star F_{\nabla}),u\rangle\right).\]
\label{cor:1}
\end{Corollary}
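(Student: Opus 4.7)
The proof is essentially a one-line consequence of the preceding lemma together with the adjoint relations between $\mu_{+}$ and $\mu_{-}$.

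First I would write
\[
|\mu_{+}u|^{2}-|\mu_{-}u|^{2}=\langle \mu_{+}u,\mu_{+}u\rangle-\langle \mu_{-}u,\mu_{-}u\rangle
=\langle \mu_{+}^{*}\mu_{+}u,u\rangle-\langle \mu_{-}^{*}\mu_{-}u,u\rangle,
\]
and then invoke the adjoint identities $\mu_{+}^{*}=-\mu_{-}$ and $\mu_{-}^{*}=-\mu_{+}$ proved in the previous subsection to rewrite the right-hand side as
\[
-\langle \mu_{-}\mu_{+}u,u\rangle+\langle \mu_{+}\mu_{-}u,u\rangle=\langle[\mu_{+},\mu_{-}]u,u\rangle.
\]
(As a sanity check one notes that $[\mu_{+},\mu_{-}]$ is formally self-adjoint, since $(\mu_{+}\mu_{-})^{*}=\mu_{-}^{*}\mu_{+}^{*}=\mu_{+}\mu_{-}$, so the pairing on the right is real, as it must be.)

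Next I would substitute the explicit formula for the commutator provided by the previous lemma,
\[
[\mu_{+},\mu_{-}]u=\frac{i}{2}\bigl(K\,D_{V}u+(\star F_{\nabla^{0}})u-u(\star F_{\nabla})\bigr),
\]
into the pairing $\langle[\mu_{+},\mu_{-}]u,u\rangle$ and use linearity of the inner product to split it into three terms. This yields exactly the claimed identity.

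There is no serious obstacle: the content has already been packaged into the commutator computation of the previous lemma and into the adjoint formulas $\mu_{\pm}^{*}=-\mu_{\mp}$. The only bookkeeping is to keep track of signs and of the fact that the inner product in the statement is written with $K\,D_{V}u$, $(\star F_{\nabla^{0}})u$, and $u(\star F_{\nabla})$ appearing on the left slot; this matches our derivation once one uses self-adjointness of $[\mu_{+},\mu_{-}]$ to flip the slots if desired.
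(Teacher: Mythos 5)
Your proof is correct and is essentially identical to the paper's: both arguments use $\mu_{+}^{*}=-\mu_{-}$ and $\mu_{-}^{*}=-\mu_{+}$ to reduce $|\mu_{+}u|^{2}-|\mu_{-}u|^{2}$ to $\langle[\mu_{+},\mu_{-}]u,u\rangle$, and then substitute the commutator formula from the preceding lemma. (Your closing remark about possibly needing to "flip slots" is unnecessary: the commutator formula already places $K\,D_{V}u$, $(\star F_{\nabla^{0}})u$, and $u(\star F_{\nabla})$ in the left slot, exactly as in the statement.)
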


\begin{proof} Using that $\mu_{+}^{*}=-\mu_{-}$ and $\mu_{-}^{*}=-\mu_{+}$
we derive
\begin{align*}
|\mu_{+}u|^2&=\langle \mu_{+}u,\mu_{+}u\rangle\\
&=\langle (\mu_{+})^*\mu_{+}u,u\rangle\\
&=\langle -\mu_{-}\mu_{+}u,u\rangle\\
&=\langle -\mu_{+}\mu_{-}u,u\rangle+
\langle [\mu_{+},\mu_{-}]u,u\rangle\\
&=\langle (\mu_{-})^*\mu_{-}u,u\rangle+\langle [\mu_{+},\mu_{-}]u,u\rangle\\
&=|\mu_{-}u|^2+\langle [\mu_{+},\mu_{-}]u,u\rangle
\end{align*}
and the corollary follows from the previous lemma.

\end{proof}

\section{A distance between transparent connections}
Let $\nabla^1$ and $\nabla^2$ be two unitary connections. We may write
$\nabla^2=\nabla^1+A$, where $A\in \Omega^{1}(M,\mbox{\rm ad}\,E)$.

If $\pi:SM\to M$ denotes the canonical projection, we obtain unitary 
connections on the pull-back bundle $\pi^{*}E$ which are related by
\[\pi^*\nabla^{2}=\pi^*\nabla^{1}+\pi^* A,\]
where $\pi^*A\in \Omega^{1}(SM,\mbox{\rm ad}\,\pi^*E)$.
These connections on $\pi^*E$ induce in turn connections $D^1$ and $D^2$
on the bundle $\mbox{\rm End}\,\pi^*E$ of complex endomorphisms of $\pi^*E$
and are related by
\[D^{2}=D^{1}+[\pi^* A,\,\cdot].\]

Suppose now that both $\nabla^1$ and $\nabla^2$ are transparent.
As explained in Section \ref{tliv}, they induce smooth cocycles
$C_1$ and $C_2$ on $\pi^*E$. By Proposition \ref{trivial}, $\pi^*E$
is trivial and via a unitary trivilization
we may use the Livsic theorem \ref{livsic} to conclude that there exists
a smooth $u\in \Omega^{0}(SM,\mbox{\rm Aut}\,\pi^*E)$ such that
\[C_{2}(x,v,t)=u(\phi_t(x,v))C_{1}(x,v,t)u^{-1}(x,v).\]
Take $\xi\in E_{x}$. Since $C_{1}(x,v,t)\xi$ (resp. $C_{2}(x,v,t)\xi$)
is $\nabla^1$-parallel (resp. $\nabla^2$-parallel)
along the geodesic determined by $(x,v)$, if we apply
$\nabla^1$ to the previous equality and set $t=0$ we obtain at $(x,v)$:
\[-A\xi=(D_{X}u)u^{-1}\xi\]
where $D:=D^1$, and since this holds for all $\xi$ we derive
\begin{equation}
D_{X}u+Au=0.
\label{keyec}
\end{equation}

The main result that we will prove about the solutions $u$ of (\ref{keyec})
is that they have a {\it finite} Fourier expansion.

Given an element $u\in \Omega^{0}(SM, \mbox{\rm End}\,\pi^*E)$, we write
$u=\sum_{m\in\Z}u_{m}$, where $u_m\in \Omega_m$.
We will say that $u$ has degree $N$, if $N$ is the smallest
non-negative integer such that $u_{m}=0$ for all $m$ with $|m|\geq N+1$.

\begin{Theorem} Let $u\in \Omega^{0}(SM, \mbox{\rm End}\,\pi^*E)$
be a smooth solution to (\ref{keyec}). Then $u$ has degree $N<\infty$.
Moreover $N\leq l-1$ where $l$ is the smallest positive integer
such that the Hermitian operators
\[\mbox{\rm End}\,E_{x}\ni \alpha\mapsto -l\,K(x)\alpha
+(i\star F_{\nabla^{2}}(x))\alpha-\alpha(i\star F_{\nabla^1}(x)),\]
\[\mbox{\rm End}\, E_{x}\ni \alpha\mapsto -l\,K(x)\alpha
-(i\star F_{\nabla^{2}}(x))\alpha+\alpha(i\star F_{\nabla^1}(x))\]
are positive definite for all $x\in M$.
\label{thm:deg}
\end{Theorem}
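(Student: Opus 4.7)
The plan is to follow the Guillemin--Kazhdan strategy, combining the Fourier decomposition with respect to $-iD_V$ with the energy identity of Corollary \ref{cor:1} applied to the modified operators $\mu_{\pm}$ built from $\nabla^1$ and $A=\nabla^2-\nabla^1$. Since $\eta_++\eta_-=D_X$ and $A_1+A_{-1}=A$, equation (\ref{keyec}) is equivalent to $\mu_+ u+\mu_- u=0$. Writing $u=\sum_{m\in\Z} u_m$ with $u_m\in\Omega_m$ and using that $\mu_{\pm}$ shifts Fourier degree by $\pm 1$, this splits into the recurrences
\[
\mu_+ u_{m-1}+\mu_- u_{m+1}=0,\qquad m\in\Z,
\]
which, by orthogonality of distinct Fourier modes, yield $|\mu_+ u_m|^2=|\mu_- u_{m+2}|^2$.

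Next I would apply Corollary \ref{cor:1} to each $u_m$ (with $\nabla=\nabla^1$, $\nabla^0=\nabla^2$). Using $D_V u_m=im\,u_m$, a direct computation gives
\[
|\mu_+ u_m|^2-|\mu_- u_m|^2=\tfrac{1}{2}\langle \mathcal L_m u_m, u_m\rangle,
\]
where $\mathcal L_m$ is the fibrewise Hermitian operator on $\mbox{\rm End}\,E$ defined by $\alpha\mapsto -mK\alpha+(i\star F_{\nabla^2})\alpha-\alpha(i\star F_{\nabla^1})$. For $m=l$ this is exactly the first operator in the statement and is positive definite by hypothesis, hence has a uniform positive lower bound $\delta$ on its smallest eigenvalue by compactness of $M$. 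The hypothesis also forces $K<0$ pointwise: adding the two operators in the statement gives $-2lK\cdot\mbox{\rm Id}$, which must be positive definite. Since $\mathcal L_m=\mathcal L_l-(m-l)K\cdot\mbox{\rm Id}$, the bound $\mathcal L_m\geq\delta\cdot\mbox{\rm Id}$ persists for all $m\geq l$. A symmetric inspection shows that the second operator in the statement equals $-\mathcal L_{-l}$, so $-\mathcal L_m\geq\delta\cdot\mbox{\rm Id}$ holds uniformly for all $m\leq-l$.

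Set $a_m:=|\mu_+ u_m|^2$ and $b_m:=|\mu_- u_m|^2$, so the recurrence reads $a_m=b_{m+2}$. For $m\geq l$ the energy estimate gives $a_m\geq b_m+\tfrac{\delta}{2}|u_m|^2$. Suppose towards contradiction that $u_{m_0}\neq 0$ for some $m_0\geq l$. Then $a_{m_0}\geq\tfrac{\delta}{2}|u_{m_0}|^2>0$, and iterating the recurrence yields $a_{m_0+2k}\geq a_{m_0}>0$ for every $k\geq 0$. But then $\sum_m a_m=|\mu_+ u|^2=\infty$, contradicting the smoothness of $u$. The mirror argument for $m_0\leq -l$ uses $b_m=a_{m-2}$ together with the estimate $a_{m-2}\geq a_m+\tfrac{\delta}{2}|u_m|^2$ valid for $m\leq -l$, giving $a_{m_0-2-2k}\geq a_{m_0-2}>0$ and again contradicting $|\mu_+u|^2<\infty$. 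Hence $u_m=0$ for all $|m|\geq l$ and $u$ has degree at most $l-1$.

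The main technical step is organising the sign conventions in the energy identity so that the two Hermitian operators appearing in the statement correspond precisely to the regimes $m\geq l$ and $m\leq-l$, and verifying the uniform-in-$m$ positivity of $\mathcal L_m$ and $-\mathcal L_m$ respectively; both rely on the implicit consequence $K<0$ and on compactness of $M$. Once those sign-book-keeping issues are sorted, the remainder is a clean telescoping argument run against the $L^2$-summability of $\mu_+ u$.
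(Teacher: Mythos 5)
Your proposal is correct and follows essentially the same route as the paper: decompose into Fourier modes, apply Corollary \ref{cor:1} mode by mode to extract the positive-definiteness estimates for $m\geq l$ and $m\leq -l$, and combine with the recurrence $\mu_+ u_{m-1}+\mu_- u_{m+1}=0$ via a telescoping argument against $L^2$-summability of $\mu_+ u$. The paper phrases the endgame slightly differently (monotonicity forces $\mu_+(u_m)=0$ for $m\geq l-2$, then injectivity of $\mu_+$ on $\Omega_m$ for $m\geq l$ kills $u_m$), while you run a direct contradiction with $\sum_m |\mu_+ u_m|^2<\infty$; these are equivalent. Your observation that the existence of $l$ already forces $K<0$, and your explicit check that $\mathcal L_m\geq\mathcal L_l$ for $m\geq l$ and $-\mathcal L_m\geq -\mathcal L_{-l}$ for $m\leq -l$, are correct and slightly more self-contained than the paper's treatment, which invokes $K<0$ as a standing assumption.
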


\begin{proof} Since $D_{X}=\eta_{+}+\eta_{-}$, equation (\ref{keyec})  may be rewritten as
\[\mu_{+}(u)+\mu_{-}(u)=0.\]
Projecting onto $\Omega_m$-components we obtain
\begin{equation}
\mu_{+}(u_{m-1})+\mu_{-}(u_{m+1})=0
\label{eq:mu}
\end{equation}
for all $m\in \Z$.
Since $K<0$, there exists a positive integer $l$ such that the Hermitian operators
\[u\mapsto -l\,K u
+(i\star F_{\nabla^{2}})u-u(i\star F_{\nabla^1}),\]
\[u\mapsto -l\,K u
-(i\star F_{\nabla^{2}})u+u(i\star F_{\nabla^1})\]
are positive definite for all $x\in M$.
Using Corollary \ref{cor:1}, we can find a constant $c>0$ such that
\begin{equation}
|\mu_{+}(u_{m})|^2\geq |\mu_{-}(u_{m})|^2+c|u_{m}|^2
\label{ineq:1}
\end{equation}
for all $m\geq l$. There is also a constant $d>0$
such that
\begin{equation}
|\mu_{-}(u_{m})|^2\geq |\mu_{+}(u_{m})|^2+d|u_{m}|^2
\label{ineq:2}
\end{equation}
for all $m\leq -l$.
Combining (\ref{eq:mu}) and (\ref{ineq:1}) we obtain
\begin{equation}
|\mu_{+}(u_{m+1})|\geq |\mu_{+}(u_{m-1})|
\label{ineq:3}
\end{equation}
for all $m\geq l-1$. Similarly, it follows from (\ref{eq:mu}) and (\ref{ineq:2}) that
\begin{equation}
|\mu_{-}(u_{m-1})|\geq |\mu_{-}(u_{m+1})|
\label{ineq:4}
\end{equation}
for all $m\geq -l+1$.
Since the function $u$ is smooth, $\mu_{+}(u_{m})$ must tend to zero in the $L^2$-topology
as $m\to\infty$. It follow from (\ref{ineq:3}) that $\mu_{+}(u_{m})=0$ for $m\geq l-2$.
However, (\ref{ineq:1}) implies that $\mu_{+}$ is injective for $m\geq l$ and thus
$u_{m}=0$ for $m\geq l$. Similarly, using (\ref{ineq:2}) and (\ref{ineq:4}) we deduce that
$u_{m}=0$ for $m\leq -l$. This shows that $u$ has finite degree $N\leq l-1$.

\end{proof}

Let $\T$ denote the space of transparent connections modulo gauge
transformations. Using the previous theorem we can introduce a distance
function on $\T$ as follows.
Given $[\nabla^1],[\nabla^{2}]\in\T$ we set
$\d([\nabla^1],[\nabla^2]):=N$, where $N$ is the smallest degree
of $u\in \Omega^{0}(SM,\mbox{\rm Aut}\,\pi^*E)$ which solves
$D_{X}u+Au=0$. It is easy to check that this definition
does not depend on the chosen representatives as we now explain. Let
$\varphi, \psi \in \Omega^{0}(M,\mbox{\rm Aut}\,E)$ and write
$\varphi^{*}\nabla^2=\psi^*\nabla^1+\tilde{A}$. One checks that
\[\tilde{A}=\varphi^{-1}A\varphi+\varphi^{-1}\nabla^{1}\varphi-\psi^{-1}\nabla^{1}\psi\]
and using this one also checks that $\varphi^{-1}u\psi$ solves
$(\tilde{D}_{X}+\tilde{A})(\cdot)=0$, where $\tilde{D}$ is the connection
induced by $\psi^{*}\nabla^{1}$. Since $\varphi^{-1}u\psi$ has the same
degree as $u$, $\d$ is well defined.

\begin{Proposition} $\d([\nabla^1],[\nabla^2])$ defines a distance
function on $\T$.
\end{Proposition}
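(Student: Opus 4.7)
The plan is to verify the four defining axioms of a distance function. Non-negativity and $\d([\nabla],[\nabla])=0$ are immediate: $N$ is a non-negative integer by construction, and $u=\mbox{\rm Id}$ is a degree-zero solution of $D_Xu+0\cdot u=0$.

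For the identity of indiscernibles I would analyse what a degree-zero solution encodes. If $\d([\nabla^1],[\nabla^2])=0$ there is $u\in\Omega_0\cap\Omega^0(SM,\mbox{\rm Aut}\,\pi^*E)$ solving $D_Xu+Au=0$. Degree zero means $D_Vu=0$, so $u$ is constant on vertical fibres and descends to a section $u\colon M\to \mbox{\rm Aut}\,E$. Evaluating the equation at $(x,v)\in SM$ gives $(\nabla^1_v u)(x)+A_x(v)u(x)=0$, and since this holds for every $v\in S_xM$ we obtain $\nabla^1u+Au=0$ as a 1-form identity, which is precisely the condition that $u$ realises a gauge equivalence between $\nabla^1$ and $\nabla^2$.

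Symmetry is the step that needs most care, and I expect it to be the main obstacle. The crucial point is that the $u$ furnished by the Livsic theorem is unitary, so $u^{-1}=u^*$. A direct computation using $D^2=D^1+[\pi^*A,\,\cdot\,]$ shows that if $u$ solves $D^1_Xu+Au=0$ then $u^{-1}$ solves $D^2_Xu^{-1}-Au^{-1}=0$, i.e.\ it is the intertwiner between $\nabla^2$ and $\nabla^1$. Next, because $D_V$ is a derivation and the connection on $\mbox{\rm End}\,\pi^*E$ is unitary, $D_V(w^*)=(D_Vw)^*$; hence $w\in\Omega_m$ implies $D_V(w^*)=-im\,w^*$, so $w^*\in\Omega_{-m}$. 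Thus adjunction exchanges the Fourier grading symmetrically and $u^*$ has the same degree as $u$, giving $\d([\nabla^2],[\nabla^1])\le\d([\nabla^1],[\nabla^2])$; the reverse inequality follows by swapping the roles.

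For the triangle inequality, let $\nabla^2=\nabla^1+A$ and $\nabla^3=\nabla^2+B$, and pick optimal solutions $u_1,u_2$ of degrees $N_1,N_2$. Composing the two Livsic intertwinings yields $C_3=(u_2u_1)(\phi_t)\,C_1\,(u_2u_1)^{-1}$, and differentiating — or using $D^2_Xu_2=D^1_Xu_2+Au_2-u_2A$ directly — one checks $D^1_X(u_2u_1)+(A+B)(u_2u_1)=0$. The Fourier grading is multiplicative, $\Omega_m\cdot\Omega_k\subset\Omega_{m+k}$, by Leibniz for $D_V$, so the degree of $u_2u_1$ is at most $N_1+N_2$, yielding $\d([\nabla^1],[\nabla^3])\le\d([\nabla^1],[\nabla^2])+\d([\nabla^2],[\nabla^3])$.
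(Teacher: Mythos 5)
Your proposal is correct and follows essentially the same route as the paper: the identity of indiscernibles is shown by observing that a degree-zero $u$ descends to a section over $M$ giving a gauge equivalence; symmetry comes from $u\mapsto u^*$ (with $u^*$ intertwining $\nabla^2$ and $\nabla^1$ and lying in $\Omega_{-m}$ whenever $u\in\Omega_m$); and the triangle inequality comes from composing intertwiners and using multiplicativity of the Fourier grading. The paper parametrises the triangle inequality slightly differently (writing $\nabla^3=\nabla^1+B$ and forming $wu^*$ rather than your $u_2u_1$), but the computations are equivalent. One spot where the paper is more explicit than you are: in the identity-of-indiscernibles step it spells out that $\nabla^1 u + Au=0$, combined with $\nabla^2 u=\nabla^1 u+[A,u]$, gives $u^*\nabla^2=\nabla^1$; your phrase "precisely the condition that $u$ realises a gauge equivalence" compresses this, but the content is the same.
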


\begin{proof} Suppose $\d([\nabla^1],[\nabla^2])=0$. This means that
there exists $u\in \Omega^{0}(SM,\mbox{\rm Aut}\,\pi^*E)$ which solves
$D^1_{X}u+Au=0$ and $D^1_{V}u=0$. But this last equality means that $u(x,v)$
is independent of $v$. Indeed, consider a unitary trivialization of $E$
over a neighbourhood $V$ of $x$ and write $\nabla^1=d+C$, where
$C$ is a $\mathfrak{u}(n)$-valued 1-form on $V$. Then
\[0=D^1_{V}u=V(u)+[\pi^{*}C(V),u]=V(u).\]
This implies that we may write $u=w\circ\pi$, where $w\in \Omega^{0}(M,\mbox{\rm Aut}\,E)$.
But
\[(D^1_{X}u)(x,v)=(D^1_{X}w\circ\pi)(x,v)=(\nabla^{1}_{d\pi(X)}w)(x)=\nabla^{1}_{v}w.\]
Thus $\nabla^{1}w+Aw=0$, which combined with $\nabla^{2}w=\nabla^{1}w+[A,w]$, implies
that $w^{*}\nabla^2=\nabla^1$. Hence $[\nabla^{1}]=[\nabla^{2}]$ as desired.

To show that $\d$ is symmetric, it suffices to note that if $u$ solves $D_{X}^{1}u+Au=0$, then
$u^{*}$ solves $D^{2}_{X}u^{*}-Au^{*}=0$ and that $u$ and $u^{*}$ have the same degree.

In order to prove the triangle inequality, consider $\nabla^2=\nabla^1+A$ with $D^{1}_{X}u+Au=0$,
and $\nabla^{3}=\nabla^{1}+B$ with $D^{1}_{X}w+Bw=0$. Obviously $\nabla^3=\nabla^2+(B-A)$.
Using that $D^2=D^1+[\pi^*A,\,\cdot]$ and that $D^{1}_{X}u^{*}=u^{*}A$, we compute
\begin{align*}
D_{X}^{2}(wu^{*})&=(D^{2}_{X}w)u^{*}+w(D_{X}^{2}u^{*})\\
&=(D^{1}_{X}w+Aw-wA)u^{*}+w(D^{1}_{X}u^{*}+Au^{*}-u^{*}A)\\
&=(A-B)wu^{*}
\end{align*}
and since $\deg(wu^{*})\leq \deg(u)+\deg(w)$ it follows that
$\d([\nabla^{3}],[\nabla^{2}])\leq \d([\nabla^{3}],[\nabla^{1}])+\d([\nabla^{2}],[\nabla^{1}])$.

\end{proof}

\noindent{\it Proof of Theorem A.} Let us apply Theorem \ref{thm:deg} when $\nabla=\nabla^2$ and
$\nabla^1$ is the trivial connection $d$.  The hypothesis of $\pm i\star F_{\nabla}(x)-K(x)\,\mbox{\rm Id}$
being positive definite for all $x\in M$ implies that $\d([\nabla],[d])=0$.
Thus $\nabla$ is gauge equivalent to the trivial connection.

\section{Proof of Theorem B}

\subsection{Levi-Civita ghosts} As in the introduction, let $K$ be the canonical line bundle and $K^s$ with $s\in \Z$ be its tensor powers (if $s=0$ we get the trivial bundle). The powers $K^s$ for $s\neq 0$ carry the Levi-Civita connection
which we denote by $\nabla_{\ell}^{s}$. If $s=0$ we understand that
this is the trivial connection.
Given an $n$-tuple of integers $S:=(s_{1},\dots,s_{n})$, the connection
\[\nabla_{\ell}^{S}:=\nabla_{\ell}^{s_{1}}\oplus\cdots \oplus\nabla_{\ell}^{s_{n}}\]
defines a transparent unitary connection on the bundle
$E_{S}:=K^{s_{1}}\oplus\cdots\oplus K^{s_{n}}$.
Clearly $c_{1}(E_{S})=(2\texttt{g}-2)(s_{1}+\dots+s_{n})$ and any complex vector bundle $E$
supporting a transparent connection is isomorphic to $E_S$ for $S$ such that
$c_{1}(E)=c_{1}(E_{S})$.

Now let $E$ be a Hermitian vector bundle and consider a unitary isomorphism
$\tau:E\to E_{S}$, where $S$ is such that $c_{1}(E)=c_{1}(E_{S})$.
The unitary connection $\tau^*\nabla_{\ell}^S$ is a transparent
connection on $E$ and its gauge equivalence class, denoted by $[S]$, is independent
of $\tau$. Note that $[S_1]=[S_2]$ if and only if
$S_1$ and $S_2$ coincide up to a permutation.

The next lemma will allows us to see these ghosts in a different form, more
appropriate for our purposes.

 Let $L$ be a $\mathfrak{u}(n)$-valued 1-form
on $SM$. It defines a unitary connection $d_{L}:=d+L$ on the trivial
bundle $SM\times \C^n$.

\begin{Lemma} Suppose $L(X)=L(H)=0$ and $L(V)=c$, where $c\in \mathfrak{u}(n)$
is a constant matrix such that $e^{2\pi\,c}=\mbox{\rm Id}$.
Let $is_{1},\dots,is_{n}$ be the eigenvalues of $-c$, where $s_{k}\in \Z$.
Set $S=(s_{1},\dots,s_{n})$. Then, there exists a unitary
trivialization $\psi:\pi^*E_{S}\to SM\times \C^n$ such that
$\psi^{*}(d_{L})=\pi^{*}\nabla_{\ell}^S$.
\label{lemma:ghosts}
\end{Lemma}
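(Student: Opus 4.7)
The plan is to diagonalise $c$ by a constant $U(n)$-conjugation, which splits the problem into $n$ line-bundle problems, and then trivialise each $\pi^*K^{s_k}$ explicitly using a tautological section.

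Since $c\in\mathfrak u(n)$ is skew-Hermitian and $e^{2\pi c}=\mbox{\rm Id}$, there is a constant $Q\in U(n)$ with $Q^{-1}cQ=-i\,\mbox{\rm diag}(s_1,\dots,s_n)$, $s_k\in\Z$. Applying $Q$ as a constant gauge transformation on $SM\times\C^n$ sends $d+L$ to $d+Q^{-1}LQ$ (no $dQ$ term), and the hypotheses $L(X)=L(H)=0$, $L(V)=c$ survive. With $c$ diagonal, $L$ itself is diagonal as a matrix-valued 1-form, so $d_L$ splits as a direct sum $\bigoplus_k(d+\ell_k)$ of line-bundle connections on $SM\times\C$ with $\ell_k(V)=-is_k$ and $\ell_k(X)=\ell_k(H)=0$. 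The task reduces to producing, for each $s\in\Z$, a unitary isomorphism $\pi^*K^s\to SM\times\C$ carrying $\pi^*\nabla_\ell^s$ to $d+\ell_s$.

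For each line bundle I would exploit the tautological unit section $\tau\in\Omega^0(SM,\pi^*K^{-1})$ defined by $\tau(x,v):=v$, where $K^{-1}=TM$. The key computation is that with $D:=\pi^*\nabla_\ell$,
\[D_X\tau=0,\qquad D_V\tau=i\tau,\qquad D_H\tau=0.\]
The first holds because the velocity field of a geodesic is $\nabla_\ell$-parallel. For the second, fix an orthonormal frame $(e_1,e_2)$ of $TM$ on a small disc and parametrise the fibre of $SM\to M$ by $v_\theta=\cos\theta\,e_1+\sin\theta\,e_2$ so that $V=\partial_\theta$; since $d\pi(V)=0$, the pullback connection acts along $V$ by differentiating only the coefficients, giving $-\sin\theta\,e_1+\cos\theta\,e_2=Jv_\theta=i\tau$. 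The third follows from $H=[V,X]$ together with $F_{\pi^*\nabla_\ell}(V,\cdot)=0$ (since $d\pi V=0$): $D_H\tau=[D_V,D_X]\tau=D_V(0)-D_X(i\tau)=0$.

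For general $s\in\Z$ I would set $\tau_s:=\tau^{*\otimes s}$ if $s\geq 0$ and $\tau_s:=\tau^{\otimes(-s)}$ if $s<0$; this is a nowhere vanishing unit section of $\pi^*K^s$. Leibniz gives $D_X\tau_s=0$, $D_V\tau_s=-is\,\tau_s$, $D_H\tau_s=0$, so declaring $\psi_s(\tau_s)=1$ defines a unitary trivialisation $\psi_s:\pi^*K^s\to SM\times\C$ converting $\pi^*\nabla_\ell^s$ into $d+\ell_s$. Composing the block-diagonal $\psi_{s_1}\oplus\cdots\oplus\psi_{s_n}$ with the constant gauge $Q$ then yields the desired $\psi:\pi^*E_S\to SM\times\C^n$ with $\psi^*(d_L)=\pi^*\nabla_\ell^S$. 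The delicate step is the identity $D_V\tau=i\tau$: the ``$i$'' here is the complex structure $J$ on the complex line bundle $TM$, and one must verify that $J$ coincides with the infinitesimal generator of the positive fibre rotation $V$ with matching sign. Once this convention is pinned down the rest is Leibniz and bookkeeping.
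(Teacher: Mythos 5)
Your proof is correct and follows essentially the same route as the paper: diagonalise $c$ by a constant unitary to reduce to $n=1$, then trivialise $\pi^*K^s$ via the tautological section $v\mapsto v^{\otimes m}$ (equivalently your $\tau_s$) and verify $D_X\tau_s=D_H\tau_s=0$, $D_V\tau_s=-is\,\tau_s$. The only cosmetic differences are that you handle $s\geq 0$ uniformly via the dual section and derive $D_H\tau=0$ from $F_{\pi^*\nabla}(V,\cdot)=0$ and $H=[V,X]$, whereas the paper just cites horizontality directly — both are fine.
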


\begin{proof} Since $c\in \mathfrak{u}(n)$, we can find a matrix $a\in U(n)$
such that $-a^{-1}ca$ is a diagonal matrix with entries $is_{1},\dots,is_{n}$.
Hence we might as well assume that $-c$ has already this diagonal form.
It is now clear that it suffices to prove the lemma for the case $n=1$
and we let $s:=s_{1}\in \Z$. The case $s=0$ is obvious and we will prove
the lemma for $s<0$ (the case $s>0$ is similar). If $s<0$, then
$K^s$ is just $(TM)^{\otimes m}$, where $m=-s>0$.
Given $v\in T_{x}M$, we let $v^{m}\in (TM)^{\otimes m} $ be the tensor
product of $v$ with itself $m$ times.

We define a unitary trivialization $\psi:\pi^{*}(TM)^{\otimes m}\to SM\times \C$ as follows. Given $(x,v)\in SM$ and $w\in (T_{x}M)^{\otimes m}$ we set
$\psi(x,v,w)=(x,v,\lambda)$, where $\lambda\in\C$ is the unique
number such that $w=\lambda v^{m}$. Note that the Riemannian metric on
$M$ determines the unitary structure on $(T_{x}M)^{\otimes m}$.
The real 2-dimensional tangent space $T_{x}M$ carries the complex structure $iv$ that rotates a vector $v\in T_{x}M$ by $\pi/2$ according to the orientation of the surface.
We will show that $(\psi^{-1})^*(\pi^{*}\nabla_{\ell}^{s})=d_{L}$.

Let $\xi \in\Omega^{0}(SM,\pi^*(T_{x}M)^{\otimes m})$ be the section given by
$\xi(x,v)=v^m$. Consider a smooth function
$f:SM\to \C$ and note that $(\psi^{-1})^{*}f=f\xi$.
By the definition of the Levi-Civita connection
\[(\pi^{*}\nabla_{\ell}^{s})_{X}(\xi)=(\pi^{*}\nabla_{\ell}^{s})_{H}(\xi)=0\,\]
and thus
\[(\pi^{*}\nabla_{\ell}^{s})_{X}(f\xi)=X(f)\xi=(\psi^{-1})^{*}(d_{L,X}(f)),\]
\[(\pi^{*}\nabla_{\ell}^{s})_{H}(f\xi)=H(f)\xi=(\psi^{-1})^{*}(d_{L,H}(f)).\]
We finally check what happens on $V$. Note that for any affine connection
$\nabla$ on $TM$ we have $\pi^*\nabla_{V}(v)=iv$. Using the definition
of the induced connection on a tensor product we deduce
\[(\pi^{*}\nabla_{\ell}^{s})_{V}(\xi)=mi\xi,\]
hence
\[(\pi^{*}\nabla_{\ell}^s)_{V}(f\xi)=V(f)\xi+fmi\xi.\]
On the other hand
\[d_{L,V}(f)=V(f)+imf\]
and the lemma follows.

\end{proof}

The next lemma, like the previous one, does not need any curvature assumption; 
only that we are working on a surface which is not a torus.
The relation of being $V$-cohomologous is an equivalence relation and given
$f:SM\to \mathfrak{u}(n)$, let $[f]_{V}$ denote the class of $f$.

\begin{Lemma} Let $c_{1},c_{2}\in \mathfrak{u}(n)$ be two matrices
such that $e^{2\pi c_{k}}=\mbox{\rm Id}$ for $k=1,2$.
Then $[c_{1}]_{V}=[c_{2}]_{V}$ if and only if 
$\mbox{\rm tr}(c_1)=\mbox{\rm tr}(c_2)$.
\label{lemma:traces}
\end{Lemma}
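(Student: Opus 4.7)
\emph{Plan.} I would treat the two implications separately. The forward direction is a cohomological obstruction coming from the Gysin sequence, while the reverse direction constructs $u$ from an isomorphism of Hermitian bundles together with the trivializations furnished by Lemma \ref{lemma:ghosts}.

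For the forward direction, suppose $c_1 = u^{-1}V(u)+u^{-1}c_2 u$ for some smooth $u:SM\to U(n)$. Taking traces gives $\mathrm{tr}(c_1)-\mathrm{tr}(c_2) = \mathrm{tr}(u^{-1}V(u))$, and the identity $\mathrm{tr}(u^{-1}V(u)) = V(\log\det u)$, interpreted locally by writing $\det u = e^{i\alpha}$ with $\alpha$ real, shows that the globally defined function $V(\alpha)$ equals a constant $\lambda\in\mathbb{R}$ with $\mathrm{tr}(c_1)-\mathrm{tr}(c_2) = i\lambda$. Integrating $V(\alpha)\equiv\lambda$ along a fiber of $\pi:SM\to M$ (of length $2\pi$) gives $\lambda = k \in \Z$, where $k$ is the winding number of $\det u:SM\to S^1$ on the fiber. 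From the Gysin sequence
\[
H^1(SM,\Z)\xrightarrow{\pi_*}H^0(M,\Z)=\Z\xrightarrow{\cdot(2-2\texttt{g})}H^2(M,\Z)=\Z,
\]
this winding lies in the kernel of multiplication by $2-2\texttt{g}$, which is trivial when $\texttt{g}\neq 1$. Hence $k=\lambda=0$ and $\mathrm{tr}(c_1)=\mathrm{tr}(c_2)$.

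For the reverse direction, assume $\mathrm{tr}(c_1)=\mathrm{tr}(c_2)$. Conjugating each $c_k$ by a constant unitary matrix is a tautological $V$-cohomology, so we may assume $c_k$ is diagonal with eigenvalues $-is_{k,j}$, $s_{k,j}\in\Z$. Writing $S_k=(s_{k,1},\ldots,s_{k,n})$, the equality $\sum_j s_{1,j}=\sum_j s_{2,j}$ gives $c_1(E_{S_1})=c_1(E_{S_2})$, so $E_{S_1}$ and $E_{S_2}$ are isomorphic as Hermitian vector bundles; fix a unitary isomorphism $\Phi:E_{S_1}\to E_{S_2}$. By Lemma \ref{lemma:ghosts} there exist unitary trivializations $\psi_j:\pi^*E_{S_j}\to SM\times\C^n$ with $\psi_j^*(\pi^*\nabla_\ell^{S_j})=d+L_j$, $L_j(V)=c_j$, $L_j(X)=L_j(H)=0$. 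Define $u:=\psi_2\circ\pi^*\Phi\circ\psi_1^{-1}:SM\to U(n)$.

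The crux is to verify that this $u$ realises the $V$-cohomology $c_1 = u^{-1}V(u)+u^{-1}c_2 u$. Since $d\pi(V)=0$, writing a section of $\pi^*E_{S_j}$ locally as $\sum_i f_i\,\pi^*\xi_i$ yields $(\pi^*\nabla_\ell^{S_j})_V\bigl(\sum_i f_i\,\pi^*\xi_i\bigr)=\sum_i V(f_i)\,\pi^*\xi_i$, and from this one reads off that $\pi^*\Phi$ intertwines the two vertical operators $(\pi^*\nabla_\ell^{S_1})_V$ and $(\pi^*\nabla_\ell^{S_2})_V$ even though it does not intertwine the full Levi-Civita connections. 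In the trivializations $\psi_j$ these vertical operators become $V+c_j$ on $C^\infty(SM,\C^n)$, and the intertwining identity $(V+c_2)\circ u = u\circ(V+c_1)$ unpacks to $V(u)=uc_1-c_2 u$, which is exactly the desired relation. This vertical-intertwining step is the main obstacle: one has to recognise that the abstract isomorphism $\Phi$, which is \emph{not} a gauge equivalence of the two Levi-Civita connections, nevertheless matches their vertical components once passed through the Lemma \ref{lemma:ghosts} trivializations.
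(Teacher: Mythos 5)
Your proof is correct and follows essentially the same route as the paper: the function $u=\psi_2\circ\pi^*\Phi\circ\psi_1^{-1}$ you construct in the reverse direction is exactly the paper's $w=\psi_2\circ\rho\circ\psi_1^{-1}$, and your observation that $\pi^*\Phi$ intertwines the vertical operators is the same fact the paper extracts by writing $\phi^*\nabla_\ell^{S_2}=\nabla_\ell^{S_1}+A$ with $\pi^*A(V)=0$. In the forward direction your winding-number computation is a slight repackaging of the paper's decomposition $-ih^{-1}dh=\pi^*\omega+df$, both resting on the same exactness of the Gysin sequence.
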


\begin{proof} Suppose first that $\mbox{\rm tr}(c_1)=\mbox{\rm tr}(c_2)$.
The matrix $c_k$ determines a bundle $E_{S_{k}}$
and let $\psi_{k}:\pi^*E_{S_{k}}\to SM\times \C^n$ be the unitary
trivialization given by the previous lemma. By hypothesis, we may take
a unitary isomorphism $\phi: E_{S_{1}}\to E_{S_{2}}$ and let
$\rho: \pi^*E_{S_{1}}\to \pi^*E_{S_{2}}$ be the induced isomorphism,
$\rho(x,v,\xi)=(x,v,\phi_{x}(\xi))$.
Let us write $\varphi:=\psi_2\circ\rho\circ\psi_{1}^{-1}(x,v,a)=(x,v,w(x,v)a)$
where $w:SM\to U(n)$ and $a\in\C^n$.
Let $G$ be the unique $\mathfrak{u}(n)$-valued 1-form on $SM$ such that
$\psi_{1}^*(d_{G})=\rho^*\pi^*\nabla^{S_{2}}_{\ell}=\pi^*\phi^*\nabla^{S_{2}}_{\ell}$.

Write $\phi^*\nabla^{S_{2}}_{\ell}=\nabla_{\ell}^{S_{1}}+A$. 
Since $\pi^*\nabla_{\ell}^{S_{1}}=\psi_{1}^*(d_{L_{1}})$ we must have
$G=L_{1}+\psi_{1}\,\pi^*A\,\psi_{1}^{-1}$ which gives $G(V)=L_{1}(V)=c_{1}$.
But $\varphi^*(d_{L_{2}})=d_{G}$, that is,
$G=w^{-1}dw+w^{-1}L_{2}w$. Applying the last equality to $V$ we derive
$c_1=w^{-1}V(w)+w^{-1}c_{2}w$, i.e., $[c_{1}]_{V}=[c_{2}]_{V}$.

Suppose now that there is $w:SM\to U(n)$ such that
$c_1=w^{-1}V(w)+w^{-1}c_{2}w$. Taking traces
\[\mbox{\rm tr}(c_1)-\mbox{\rm tr}(c_2)=h^{-1}V(h),\]
where $h:=\det w:SM\to S^1$. Arguing as in the proof of Theorem \ref{thm:abelian}, the function $h$ gives rise to a real-valued closed 1-form in $SM$ given by
$-ih^{-1}dh$. Since $\pi^*:H^1(M,\re)\to H^{1}(SM,\re)$
is an isomorphism (this follows easily from the Gysin sequence, since
$M$ is not the 2-torus), there exists a closed 1-form $\omega$ in
$M$ and a smooth function $f:SM\to\re$ such that
\[-ih^{-1}dh=\pi^*\omega+df.\]
Applying this equality to $V$ we derive
\[iV(f)=\mbox{\rm tr}(c_1)-\mbox{\rm tr}(c_2)\]
which clearly implies $\mbox{\rm tr}(c_1)=\mbox{\rm tr}(c_2)$.

\end{proof}

The next lemma is not needed in what follows, but it illustrates
the distance in $\mathcal T$.

\begin{Lemma} Suppose $\sum_{k=1}^{n}s_k=0$, so that $\nabla^{S}_{\ell}$ induces a connection
on the trivial bundle. Then $\d([S],[d])=\max|s_k|$, where $d$ is the trivial
connection.
\label{lemma:dist}
\end{Lemma}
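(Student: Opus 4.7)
\emph{Proof sketch.} My plan is to invoke Lemma \ref{lemma:ghosts} in order to reduce the Livsic equation (\ref{keyec}) to a transport equation whose solutions can be read off directly. Since $\sum_{k}s_{k}=0$, the bundle $E_{S}$ has trivial first Chern class, so I fix a unitary isomorphism $\tau:E\to E_{S}$ between the trivial bundle and $E_{S}$ and set $\nabla^{2}:=\tau^{*}\nabla_{\ell}^{S}$, $\nabla^{1}:=d$. Taking $c:=\mbox{\rm diag}(-is_{1},\dots,-is_{n})$, Lemma \ref{lemma:ghosts} furnishes a unitary trivialization $\psi:\pi^{*}E_{S}\to SM\times\C^{n}$ with $\psi^{*}d_{L}=\pi^{*}\nabla_{\ell}^{S}$, $L(X)=L(H)=0$ and $L(V)=c$. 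Composing $\psi\circ\pi^{*}\tau$ with the flat unitary trivialization of $\pi^{*}E$ pulled back from $M$ produces a gauge transformation $w:SM\to U(n)$ which, by the standard transformation formula, satisfies $\pi^{*}A=w^{-1}dw+w^{-1}Lw$. Evaluating on $V$ and using $\pi^{*}A(V)=0$ gives $V(w)=-cw$, hence $w(\Phi_{\theta}p)=e^{-c\theta}w(p)$ along the $V$-orbits; in the basis diagonalising $-c$ this forces each entry to be a pure Fourier mode, with $w_{jk}\in\Omega_{s_{j}}$ and $(w^{-1})_{jk}\in\Omega_{-s_{k}}$.

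Next I turn to $D_{X}^{1}u+Au=0$. In the chosen trivialization $D_{X}^{1}u=X(u)$, and $\pi^{*}A(X)=w^{-1}X(w)$ since $L(X)=0$, so the equation reduces to $X(wu)=0$. Because the geodesic flow is Anosov and therefore topologically transitive, the continuous function $wu$ must be constant: $wu=c_{0}$ for some matrix $c_{0}$, and $u\in\mbox{\rm Aut}\,\pi^{*}E$ iff $c_{0}$ is invertible. Thus the admissible solutions are precisely those of the form $u=w^{-1}c_{0}$ with $c_{0}$ a constant invertible matrix.

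Finally I extract the degree. Decomposing mode by mode yields $(u)_{m}=w^{-1}P_{m}c_{0}$, where $P_{m}$ is the diagonal projector onto the indices $k$ with $s_{k}=-m$; invertibility of $w^{-1}$ gives $(u)_{m}=0\Longleftrightarrow P_{m}c_{0}=0$, i.e.\ the rows of $c_{0}$ labelled by $\{k:s_{k}=-m\}$ vanish. Set $m_{*}:=\max_{k}|s_{k}|$; one of $\pm m_{*}$ is realised as $-s_{k}$ for some $k$, so demanding $(u)_{\pm m_{*}}=0$ would force a zero row of $c_{0}$ and destroy invertibility. Hence $\deg u\geq m_{*}$, with equality attained by $c_{0}=\mbox{\rm Id}$, which gives $u=w^{-1}$ of degree exactly $m_{*}$; therefore $\d([S],[d])=m_{*}$.

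The only genuinely nontrivial step is the first paragraph: pinning down the Fourier mode structure of $w$ via Lemma \ref{lemma:ghosts} and correctly applying the gauge transformation identity to deduce $V(w)=-cw$. Once this is in hand, the remainder of the argument is Anosov transitivity together with an elementary linear-algebra dichotomy on $c_{0}$.
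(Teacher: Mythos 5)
Your proof is correct and follows the same essential route as the paper: invoke Lemma \ref{lemma:ghosts} to produce the trivialization $\psi$, identify the transition function between the two trivializations of $\pi^{*}E_{S}$ as the Livsic solution (up to a constant right factor), and read off its degree. You make two refinements worth noting. First, rather than ``inspecting'' the explicit formula for $\psi$ as the paper does, you extract the Fourier structure $w_{jk}\in\Omega_{s_{j}}$, $(w^{-1})_{jk}\in\Omega_{-s_{k}}$ directly from the equation $V(w)=-cw$, which follows cleanly from evaluating the gauge-transformation identity on $V$; this is both slicker and less error-prone. Second, you treat explicitly the ambiguity $u=w^{-1}c_{0}$ in the Livsic solution and show, via the projector argument ($(u)_{m}=w^{-1}P_{m}c_{0}=0\iff P_{m}c_{0}=0$, which would force a zero row in $c_{0}$), that the degree cannot drop below $\max|s_{k}|$ for \emph{any} invertible constant $c_{0}$; the paper only remarks that any two solutions differ by a constant and leaves the reader to verify this preserves the degree. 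Both points are faithful expansions of what the paper leaves implicit, so the argument is the same in substance, presented more carefully.
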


\begin{proof} Let $\phi:E_{S}\to M\times \C^n$ be a unitary trivialization and let
$\rho:\pi^{*}E_{S}\to SM\times \C^n$ be the induced unitary trivialization,
$\rho(x,v,\xi)=(x,v,\phi_{x}(\xi))$. Let $A$ be the unique $\mathfrak{u}(n)$-valued
1-form on $M$ given by $\phi^*(d_{A})=\nabla_{\ell}^{S}$, where $d_{A}=d+A$.
Of course, we also have $\rho^{*}(d_{\pi^{*}A})=\pi^{*}\nabla^{S}_{\ell}$.
By Lemma \ref{lemma:ghosts} there is a unitary trivialization $\psi: \pi^{*}E_{S}\to SM\times \C^n$
such that $\psi^{*}(d_{L})=\pi^{*}\nabla_{\ell}^{S}$. 
Hence $(\rho\psi^{-1})^{*}(d_{\pi^{*}A})=d_{L}$.
In other words, if we write $\rho\psi^{-1}(x,v,\xi)=(x,v,u(x,v)\xi)$, where
$u:SM\to U(n)$, then $L=u^{-1}du+u^{-1}\,\pi^{*}A\,u$. Since $L(X)=0$, $u$ solves
$X(u)+Au=0$. An inspection of the construction of $\psi$ in Lemma \ref{lemma:ghosts}
reveals that $\psi$ has polynomial dependence on the velocities with degree
given by $\max|s_{k}|$. It follows that $\deg(u)=\max|s_{k}|$ and thus
$\d([S],[d])\leq \max|s_k|$. Finally note that equality must hold
since if $w$ is another solution of $X(u)+Au=0$, then $u^{*}w$ must be constant.
Indeed, a simple calculation shows that $X(u^{*}w)=0$ and the claim follows from the transitivity
of the geodesic flow of $X$.

\end{proof}

\subsection{Proof of Theorem B} (Forward direction.) The matrix $c$ determines
a bundle $E_S$ and by considering a unitary isomorphism $\tau:E\to E_S$
we may suppose $E=E_{S}$.
Let $\nabla$ be a transparent connection on $E_{S}$ and let
$C$ be its associated cocycle in $\pi^*E_{S}$.
Let $\psi:\pi^*E_{S}\to SM\times \C^n$ be the unitary trivialization
given by Lemma \ref{lemma:ghosts}.

 Write
\[\psi\,C(x,v,t)\,\psi^{-1}(x,v,a)=(\phi_{t}(x,v),D(x,v,t)a),\] where $D:SM\times\re\to U(n)$
is a cocycle as in Definition \ref{def1}. By the Livsic theorem \ref{livsic} there exists
a smooth function $u:SM\to U(n)$ such that $D(x,v,t)=u(\phi_{t}(x,v))u^{-1}(x,v)$.
Let $\Gamma:\re \to SM$ be $\Gamma(t)=\phi_{t}(x,v)$. By the definition
of $C$, $\Gamma^{*}\pi^*\nabla(t\mapsto C(x,v,t)\xi)=0$ for any $\xi\in E_{S}(x)$. Now let
$G$ be the unique $\mathfrak{u}(n)$-valued 1-form on $SM$ such that
$\psi^{*}(d_{G})=\pi^*\nabla$, where $d_{G}=d+G$.
Then $\Gamma^{*}d_G(t\mapsto D(x,v,t)a)=0$ for all $a\in \C^n$. Equivalently
\[\frac{dD}{dt}+G(X)D=0\]
and setting $t=0$, we obtain: $X(u)+G(X)u=0$.

As in the proof of Lemma \ref{lemma:traces}, write $\nabla=\nabla_{\ell}^S+A$. Since $\pi^*\nabla_{\ell}^S=\psi^*(d_{L})$ we must have $G=L+\psi\,\pi^*A\,\psi^{-1}$ which gives $G(V)=L(V)=c$.

Now let us set $B:=u^{-1}du+u^{-1}Gu$. Then $d_{G}$ and $d_{B}$ are gauge equivalent,
but $B(X)=0$.

Since $F_{\pi^*\nabla}(\cdot\,,V)=0$, we must also have
$F_{B}(\cdot\,,V)=0$. Using that $F_{B}=dB+B\wedge B$ and $B(X)=0$ we compute
\[F_{B}(X,V)=dB(X,V)+[B(X),B(V)]=dB(X,V).\]
But
\[dB(X,V)=XB(V)-VB(X)-B([X,V])=XB(V)+B(H),\]
hence
\begin{equation}
B(H)=-XB(V).
\label{eq:cuXV}
\end{equation}
We also compute
\[F_{B}(H,V)=dB(H,V)+[B(H),B(V)],\]
and
\[dB(H,V)=HB(V)-VB(H)-B([H,V])=HB(V)-VB(H),\]
hence
\begin{equation}
HB(V)-VB(H)+[B(H),B(V)]=0.
\label{eq:cuHV}
\end{equation}
Combining (\ref{eq:cuXV}) and (\ref{eq:cuHV}) we derive the following
non-linear PDE for $f:=B(V)$
\begin{equation}
H(f)+VX(f)-[X(f),f]=0.
\label{eq:nlpde}
\end{equation}
This is precisely equation (\ref{eq:keypde}) in the Introduction and since
$f=B(V)=u^{-1}V(u)+u^{-1}cu$ it follows that $f\in \mathcal H_{c}$.
Note that $f$ is defined exclusively in terms of $u$ and $c$ and
$u$ must solve $X(u)+G(X)u=0$. However, up to right multiplication
by an element $q\in U(n)$, there is only one such solution.
Indeed, if $w$ is another solution, then $X(u^*w)=0$ and by transitivity
of the geodesic flow there is $q\in U(n)$ such that $w=uq$.
This implies that $f$ is uniquely defined in $\mathcal H_{c}/U(n)$.
To complete the correspondence in the forward direction, we must
check that if we consider a connection gauge equivalent to $\nabla$
we obtain the same $f$. A connection $\nabla^1$ gauge equivalent
to $\nabla$ determines a connection $d_{G_1}$ in $SM$ gauge equivalent
to $d_{G}$. In other words, there is a smooth function $r:SM\to U(n)$
such that $G_1=r^{*}dr+r^{*}Gr$. But if $u$ solves
$X(u)+G(X)u=0$, then $r^{*}u$ solves $X(w)+G_1(X)w=0$ (unique up to 
multiplication on the right by an element in $U(n)$).
Next observe that
$$G_1(V)=c=r^*V(r)+r^*cr$$ and
$$f_{1}=u^*rV(r^*u)+u^*rcr^*u=u^*V(u)+u^*(rV(r^*)+rcr^*)u=f$$
thus obtaining a well defined map $\T\mapsto \mathcal H_{c}/U(n)$.

\medskip

(Backward direction.) Suppose now that we have a solution $f$ of
(\ref{eq:nlpde}) such that there is $u:SM\to U(n)$ with
$f=u^*V(u)+u^*cu$. Define a $\mathfrak u(n)$-valued 1-form $G$
on $SM$ by setting:
\begin{align*}
G(X)&=-X(u)u^*,\\
G(H)&=-uX(f)u^*-H(u)u^*,\\
G(V)&=c,
\end{align*}
and define an element $\A\in \Omega^{1}(SM,\mbox{\rm ad}\,\pi^*E_{S})$
by 
\[\A:=\psi^{-1}(G-L)\psi.\]
Clearly $\A(V)=0$ and we wish to show that there exists
$A\in \Omega^{1}(M,\mbox{\rm ad}\,E_{S})$ such that $\A=\pi^*A$.
For this, it suffices to show that $D_{V}\A(X)=\A(H)$ and
$D_{V}\A(H)=-\A(X)$, where $D$ here stands for the connection
induced by $\nabla_{\ell}^S$. Equivalently, using the unitary
isomorphism $\psi$, we are required to show that
$D_{V}^LG(X)=G(H)$ and $D_{V}G(H)=-G(X)$, where $D^L$ is induced
by $d_{L}$. Explicitly this means $V(G(X))+[c,G(X)]=G(H)$ and
$V(G(H))+[c,G(H)]=-G(X)$.
Using the definition of $G(X)$, the structure equations of the metric
and $uf=V(u)+cu$ we compute:
\begin{align*}
V(G(X))&=-VX(u)u^*-X(u)V(u^*)\\
&=-XV(u)u^*-H(u)u^*-X(u)V(u^*)\\
&=-X(uf-cu)u^*-H(u)u^*-X(u)(u^*c-fu^*)\\
&=-uX(f)u^*-H(u)u^*+[c,X(u)u^*]\\
&=G(H)-[c,G(X)].
\end{align*}
Similarly we compute (we omit some of the details)
\begin{align*}
V(G(H))&=-V(u)X(f)u^*-uVX(f)u^*-uX(f)V(u^*)-VH(u)u^*-H(u)V(u^*)\\
&=X(u)u^*+u([X(f),f]-VX(f)-H(f))u^*+[c,uX(f)u^*]+[c,H(u)u^*]\\
&=-G(X)-[c,G(H)]+u([X(f),f]-VX(f)-H(f))u^*.\\
\end{align*}
Thus, if $f$ satisfies equation (\ref{eq:nlpde}) we have
$V(G(H))+[c,G(H)]=-G(X)$ as desired.
Since $\A=\pi^*A$, $\nabla:=\nabla_{\ell}^S+A$ defines a transparent
connection on $E_S$. To complete the backward correspondence we need
to discuss what happens when we have another solution $w$ to the
equation $f=u^*V(u)+u^*cu$. In this case $w$ determines a connection
$d_{G_{1}}$ on $SM$ and it is straightforward to check that if
we let $r:=uw^*$, then $G_1=r^*dr+r^*Gr$. In other words, $d_{G}$
and $d_{G_{1}}$ are gauge equivalent in $SM$. It follows
that $\pi^*\nabla$ and $\pi^*\nabla^1$ are gauge equivalent. But this implies
that $\nabla$ and $\nabla^1$ are gauge equivalent.
Indeed, suppose $\pi^*\nabla$ and $\pi^*\nabla^1$
are gauge equivalent via $\varphi\in \Omega^0(SM,\mbox{\rm Aut}\,\pi^*E_S)$,
 i.e. 
$\pi^{*}\nabla^1=\varphi^{-1}D\varphi+\pi^*\nabla$.
Apply this to $V$ to obtain $D_{V}\varphi=0$, so $\varphi$ only depends
on the base point $x$. Applying the equality to $X$ we deduce that
there is $\varphi\in\Omega^0(M,\mbox{\rm Aut}\,E_S)$ such that
$\nabla^1=\varphi^{-1}\nabla\varphi+\nabla$
and thus $\nabla^1$ and $\nabla$ are gauge equivalent. 
The proof of Theorem B is now complete.

\medskip

\noindent{\bf Addendum to Theorem B.}
We claim that $\mbox{\rm tr}(f)=\mbox{\rm tr}(c)$ and thus
 $\mbox{\rm tr}(f)$ is constant and determined by the topology
of $E$.
Consider the transparent connections induced by $\nabla$ and $\nabla_{\ell}^S$
on the line bundle $\det E_{S}$. By Theorem \ref{thm:abelian}
they must be gauge equivalent; in other words there is a smooth
function $g:M\to S^1$ such that $\mbox{\rm tr}(A)=dg/g$.
Recall that $X(u)+G(X)u=0$ and $G(X)=\psi A_{x}(v)\psi^{-1}$.
Hence $\mbox{\rm tr}(G(X))=\mbox{\rm tr}(A)=X(g)/g$.
Since $X(\det u)=\det u\,\mbox{\rm tr}(u^{*}X(u))$ we derive
$X(\det u)=\det u(-X(g)/g)$ and thus $X(g\det u)=0$.
By transitivity of the geodesic flow $g\det u$ is a constant
and hence $V(\det u)=0$. But this is equivalent to $\mbox{\rm tr}(u^*V(u))=0$.
Since $f=u^*V(u)+u^* c u$,  $\mbox{\rm tr}(f)=\mbox{\rm tr}(c)$
as desired.

\begin{Remark}{\rm One can also compute the curvature $F_{B}(X,H)$
of the connection $d_B$ from the theorem.
Using that $B(X)=0$ we derive:
\begin{equation}
F_{B}(X,H)=dB(X,H)= XB(H)-B([X,H])=-X^{2}(f)-Kf.
\label{eq:ode}
\end{equation}
Note that $F_{B}(X,H)$ is conjugate to $\star F_{\nabla}\circ\pi$ via a unitary trivialization.
}
\label{rem:curva}
\end{Remark}

\subsection{Transparent connections at distance one from the trivial connection}

Let $\nabla$ be a transparent connection on the trivial bundle with
$\d([\nabla],[d])=1$. If we follow the proof of Theorem B, we see that
in this case $\psi$ is the identity and if we write
$\nabla=d+A$, then $G=\pi^*A$.
Since $\d([\nabla],[d])=1$, there exists a smooth 
function $u:SM\to U(n)$ such that $u=u_{-1}+u_{0}+u_{1}$ and
$X(u)+Au=0$. Also, $B=u^{-1}du+u^{-1}\,\pi^*A\, u$ and
$f=B(V)=u^{*}V(u)=-V(u^{*})u$.

\begin{Lemma} $f\in \Omega_0$.
\end{Lemma}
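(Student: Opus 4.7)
The plan is to prove the equivalent statement $V(f)=0$. I will first compute $V(f)$ explicitly using the given form $u=u_{-1}+u_0+u_1$, then show that each Fourier piece of the result vanishes as a consequence of the equation $X(u)+Au=0$ together with the unitarity $u^*u=I$, and finally invoke an injectivity property of $\eta_+$ on $\Omega_1$ coming from the Pestov identity.

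Since $u_m\in\Omega_m$ means $V(u_m)=im\,u_m$ (because $D_V=V$ on matrix-valued functions over the trivial bundle), we have $V(u)=i(u_1-u_{-1})$ and $V^2(u)=u_0-u$. Applying Leibniz to $f=u^*V(u)$ gives $V(f)=V(u^*)V(u)+u^*V^2(u)$. Using the $\Omega_{\pm 2}$ components of $u^*u=I$, namely $u_{-1}^*u_1=u_1^*u_{-1}=0$, together with the $\Omega_0$ component $u_{-1}^*u_{-1}+u_0^*u_0+u_1^*u_1=I$, this collapses to
\[V(f)=(u_{-1}^*+u_1^*)\,u_0,\]
whose two summands lie in $\Omega_1$ and $\Omega_{-1}$ respectively, so each must vanish separately.

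To show this, I would project $X(u)+Au=0$ onto Fourier modes, writing $X=\eta_++\eta_-$, $A=A_1+A_{-1}$, and $\mu_\pm=\eta_\pm+A_{\pm 1}$; this yields $\mu_+(u_1)=\mu_-(u_{-1})=\mu_+(u_0)=\mu_-(u_0)=0$. Taking pointwise matrix adjoints (using $\eta_+(w)^*=\eta_-(w^*)$ and $A_1^*=-A_{-1}$) produces the identities $\eta_+(u_{-1}^*)=u_{-1}^*A_1$ and $\eta_-(u_1^*)=u_1^*A_{-1}$. Leibniz combined with these then gives
\[\eta_+(u_{-1}^*u_0)=u_{-1}^*\bigl(\eta_+(u_0)+A_1u_0\bigr)=u_{-1}^*\mu_+(u_0)=0,\]
and analogously $\eta_-(u_1^*u_0)=u_1^*\mu_-(u_0)=0$.

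The final step is an injectivity argument. The commutator identity $[\eta_+,\eta_-]=\tfrac{i}{2}KV$ on scalar functions gives the Pestov-type relation $|\eta_+v|^2-|\eta_-v|^2=-\tfrac{n}{2}\int_{SM}K|v|^2\,d\mu$ for scalar $v\in\Omega_n$, so under $K<0$ the operators $\eta_+\colon\Omega_1\to\Omega_2$ and $\eta_-\colon\Omega_{-1}\to\Omega_{-2}$ are injective. Applied entry-wise to the matrix-valued sections $u_{-1}^*u_0\in\Omega_1$ and $u_1^*u_0\in\Omega_{-1}$, both vanish, hence $V(f)=0$ and $f\in\Omega_0$. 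The only step beyond routine bookkeeping is the Leibniz manipulation that converts $\eta_+(u_{-1}^*u_0)$ into $u_{-1}^*\mu_+(u_0)$ using the adjoint form of the equations of motion; after that, scalar Pestov injectivity closes the argument.
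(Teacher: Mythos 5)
Your proposal is correct, but it takes a genuinely different route from the paper. The paper's proof is purely dynamical: it splits $u$ into even and odd parts, notes that $X(u_0^*(u_{-1}+u_1))=0$ as a consequence of $X(u)+Au=0$ and skew-Hermiticity of $A$, then invokes transitivity of the geodesic flow to conclude that $u_0^*(u_{-1}+u_1)$ is a constant and hence (being supported in $\Omega_{\pm1}$) must vanish, giving $u_0^*u_{\pm1}=0$; after that one simply expands $f=u^*V(u)$ directly and uses the $\Omega_{\pm2}$ components of $u^*u=\mbox{\rm Id}$ to land in $\Omega_0$. Your argument instead computes $V(f)$, reduces it via the Fourier components of $u^*u=\mbox{\rm Id}$ to $(u_{-1}^*+u_1^*)u_0$, shows $\eta_+(u_{-1}^*u_0)=\eta_-(u_1^*u_0)=0$ by projecting the transport equation and taking adjoints, and then appeals to the Pestov-type injectivity of $\eta_+$ on $\Omega_1$ and $\eta_-$ on $\Omega_{-1}$ under $K<0$. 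Both proofs are valid in the paper's setting; the trade-off is that the paper's approach uses only transitivity of the Anosov flow and no curvature sign, whereas yours leans on $K<0$ and the commutator/Pestov machinery already set up in the paper. Your reduction $V(f)=(u_{-1}^*+u_1^*)u_0$ is a nice reformulation, and the adjoint identities $\eta_+(u_{-1}^*)=u_{-1}^*A_1$, $\eta_-(u_1^*)=u_1^*A_{-1}$ correctly exploit $A_1^*=-A_{-1}$ and the reality of $X,H$; the only caveat worth stating explicitly is that the injectivity of $\eta_{\pm}$ on $\Omega_{\pm1}$ is being applied entry-wise to matrix-valued sections, which is legitimate since with the trivial connection $\eta_\pm$ act componentwise.
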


\begin{proof} By separating $X(u)+Au=0$ into even and odd parts we
deduce
\[X(u_0)+Au_{0}=0,\]
\[X(u_{-1}+u_{1})+A(u_{-1}+u_{1})=0.\]
These two equations yield
\[X(u^{*}_{0}(u_{-1}+u_{1}))=u_{0}^{*}A(u_{-1}+u_{1})+u_{0}^{*}(-A)(u_{-1}+u_{1})=
0,\]
and since the geodesic flow is transitive $u^{*}_{0}(u_{-1}+u_{1})$
must be constant and thus
\begin{equation}
u_{0}^{*}u_{1}=u_{0}^{*}u_{-1}=0.
\label{eq:u0u1}
\end{equation}
Using the special form of $u$ we derive
\begin{align*}
f&=(u_{-1}^{*}+u_{0}^{*}+u_{1}^{*})(-iu_{-1}+iu_{1})\\
&=i(u^{*}_{-1}u_{1}-u_{-1}^{*}u_{-1}+u_{0}^{*}u_{1}-u_{0}^{*}u_{-1}
+u_{1}^{*}u_{1}-u_{1}^{*}u_{-1}).\\
\end{align*}
Using that $u^{*}u=\mbox{\rm Id}$ we see that the terms
of degree $\pm 2$, $u_{-1}^{*}u_{1}\in \Omega_{2}$ and $u_{1}^{*}u_{-1}\in\Omega_{-2}$ must vanish. Using (\ref{eq:u0u1}) we obtain
\begin{equation*}
f=i(u_{1}^{*}u_{1}-u_{-1}^{*}u_{-1})\in\Omega_{0}.
\end{equation*}

\end{proof}

\begin{Corollary} Suppose the Hermitian
 matrix $\pm i\star F_{\nabla}(x)-2K(x)\,\mbox{\rm Id}$
is positive definite for all $x\in M$. Then $f\in \Omega_0$.
\end{Corollary}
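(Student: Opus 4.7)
The plan is to use the curvature hypothesis to force every solution $u$ of $D_X u + Au = 0$ to have Fourier degree at most one (via Theorem \ref{thm:deg}), and then to invoke the preceding lemma.

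First I would apply Theorem \ref{thm:deg} with $\nabla^1 = d$ (the trivial connection) and $\nabla^2 = \nabla$. Since $F_{\nabla^1} = 0$, the two Hermitian operators on $\mathrm{End}\,E_x$ appearing in the statement of that theorem collapse to left multiplication by the matrices $-lK(x)\,\mathrm{Id} \pm i\star F_{\nabla}(x)$. Because the Hermitian inner product on $\mathrm{End}\,E_x$ is $\mathrm{tr}(\alpha\beta^*)$, positivity of the operator $\alpha \mapsto M\alpha$ is equivalent to positivity of the Hermitian matrix $M$ itself. Thus the condition that $l$ satisfies the hypothesis of Theorem \ref{thm:deg} becomes exactly positive definiteness of $\pm i\star F_{\nabla}(x) - lK(x)\,\mathrm{Id}$ for all $x$.

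The assumption of the corollary is therefore precisely the statement that $l = 2$ works. Hence Theorem \ref{thm:deg} gives $N \leq l - 1 = 1$, so every solution $u$ of $D_X u + Au = 0$ has the form $u = u_{-1} + u_0 + u_1$, i.e.\ $\d([\nabla],[d]) \leq 1$. At this point, the Fourier decomposition used in the preceding Lemma applies verbatim: splitting $X(u) + Au = 0$ into even and odd parts under $D_V$ yields $u_0^* u_{\pm 1} = 0$ by transitivity of the geodesic flow, and then expanding $f = u^* V(u)$ and using $u^*u = \mathrm{Id}$ to cancel the degree $\pm 2$ terms $u_{\mp 1}^* u_{\pm 1}$ gives
\[
f = i(u_1^* u_1 - u_{-1}^* u_{-1}) \in \Omega_0.
\]

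The only nontrivial step is the translation from the curvature hypothesis to the applicability of Theorem \ref{thm:deg} with $l=2$; everything else is a direct appeal to earlier results. Since the preceding Lemma was stated for connections at distance exactly one, I would note explicitly that its proof in fact only uses the Fourier truncation $u = u_{-1} + u_0 + u_1$, which is what Theorem \ref{thm:deg} supplies here (the degenerate case $u_{\pm 1} = 0$ gives $f = 0 \in \Omega_0$ trivially).
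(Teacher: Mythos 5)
Your proof is correct and takes essentially the same route as the paper, which simply cites Theorem \ref{thm:deg} (to get $\d([d],[\nabla])\leq 1$) and the preceding lemma. You have merely spelled out the translation of the operator-positivity hypothesis into the matrix-positivity hypothesis and flagged the degenerate case $u_{\pm1}=0$, both of which the paper leaves implicit.
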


\begin{proof} This follows right away from the last lemma
and Theorem \ref{thm:deg} which implies that $\d([d],[\nabla])\leq 1$.
\end{proof}

Since $f\in\Omega_0$ we can think of $f$ as a function which depends only
 on the base point $x$. Thus $X(f)(x,v)=df_{x}(v)$ and $H(f)(x,v)=df_{x}(iv)$.
Since $VX(f)=XV(f)+H(f)=H(f)$, equation (\ref{eq:nlpde}) gives $2H(f)=[X(f),f]$
and we can rewrite this in terms of matrix valued 1-forms
as 
\begin{equation}
2\star df=[df,f].
\label{eq:pdeonm}
\end{equation}

We discuss this equation in the next subsection. Note that if we wish
$f$ to be $V$-cohomologous to a matrix $c$ as in Theorem B we must
have $e^{2\pi f(x)}=\mbox{\rm Id}$. 

\subsection{Solutions to $2\star df=[df,f]$.}
\label{sub:sol}
If we let $A:=\frac{1}{2}\star df$, then $2\star df=[df,f]$ may be rewritten
as $d_{A}f=0$, so $f$ is covariant constant relative to
the connection $d_{A}$. This implies that $f$ only hits one adjoint orbit
of the adjoint action of $U(n)$ on $\mathfrak{u}(n)$. To see that this is
 the case observe first that
$d\mbox{\rm tr}(f^m)=\mbox{\rm tr}(d_{A}f^{m})=0$ for any $m$ and thus
the eigenvalues of $f(x)$ must be constant (and belong to $i\Z$ if
$e^{2\pi f(x)}=\mbox{\rm Id}$). Also,
the multiplicities of the eigenvalues do not change with $x$. Indeed,
let $\xi\in \C^n$ be an eigenvector of $f(x)$ with eigenvalue $\lambda$
and let $\gamma:[0,1]\to M$ be a curve connecting $x$ to $y$. 
Let $\xi(t)$ be the parallel transport of $\xi$ along $\gamma$. Since
$f(\gamma(t))\xi(t)$ is also parallel ($d_{A}f=0$), it must equal 
$\lambda\xi(t)$ and thus $f(y)\xi(1)=\lambda \xi(1)$ which shows that
parallel transport preserves the eigenspaces of $\lambda$.

Suppose now $f:M\to \mathfrak{su}(2)$. The discussion above implies that
$f^{2}=-\lambda^2\mbox{\rm Id}$ for some constant $\lambda$. This implies that
$df\,f=-f\,df$, so we rewrite $2\star df=[df,f]$ as $\star df=df\,f$.
Applying $\star$ we derive $df=\lambda^2df$. 
Hence if $\lambda^2\neq 1$, $f$ must be 
constant. Let us suppose then that $f^{2}=-\mbox{\rm Id}$, so $f$ hits
the adjoint orbit of $$\left(\begin{array}{cc}
i&0\\
0&-i\\
\end{array}\right)$$ which we denote by $\bf S$ and we identify
with the 2-sphere. For $g\in \bf S$ and $X\in T_{g}\bf S$,
let $J_{g}(X):=Xg$. Clearly $J_{g}^2=-\mbox{\rm Id}$, so $J_g$ is a complex
structure in $\bf S$ and the equation $\star df=df\,f$ simply
says $df_{x}(iv)=J_{f(x)}(df_{x}(v))$, i.e. $f:M\to\bf S$ is a holomorphic map.

We now wish to show that given such a map $f:M\to\bf S$, then
$f$ is $V$-cohomologous to the zero matrix, that is, there exists
$u:SM\to SU(2)$ such that $f=u^*V(u)$. This would show that
$\mathcal H_{0}\cap\Omega_0$ can be identified with the set
of holomorphic maps $f:M\to \mathbb \C\mathbb P^1$
as claimed in the introduction.
 
Consider a map $f:M\to \bf S$ and let $L(x)$ (resp. $U(x)$) be the eigenspace corresponding to the eigenvalue $i$ (resp. $-i$) of $f(x)$. 
We have an orthogonal decomposition $\C^{2}=L(x)\oplus U(x)$ for every
$x\in M$.
Consider sections $\alpha\in \Omega^{1,0}(M,\C)$ and
$\beta\in \Omega^{1,0}(M,\mbox{\rm Hom}(L,U))=\Omega^{1,0}(M,L^*U)$ such that
$|\alpha|^2+|\beta|^2=1$. Such pair of sections always exists; for example, we can choose a section $\tilde{\beta}$ with a finite number of isolated zeros
and then choose $\tilde{\alpha}$ such that it does not vanish on the zeros
of $\tilde{\beta}$. Then we
set $\alpha:=\tilde{\alpha}/(|\tilde{\alpha}|^2+|\tilde{\beta}|^2)^{1/2}$ and $\beta:=\tilde{\beta}/(|\tilde{\alpha}|^2+|\tilde{\beta}|^2)^{1/2}$.

Note that $\bar{\alpha}\in  \Omega^{0,1}(M,\C)$
and $\beta^*\in \Omega^{0,1}(M,\mbox{\rm Hom}(U,L))=\Omega^{0,1}(M,U^*L)$. 
Using the orthogonal decomposition we define $u:SM\to SU(2)$ by
 $$u(x,v)=\left(\begin{array}{cc}
\alpha(x,v)&\beta^*(x,v)\\
-\beta(x,v)&\bar{\alpha}(x,v)\\
\end{array}\right).$$
Clearly $u=u_{-1}+u_{1}$, where
 $$u_{1}=\left(\begin{array}{cc}
\alpha&0\\
-\beta&0\\
\end{array}\right)$$
and
 $$u_{-1}=\left(\begin{array}{cc}
0&\beta^*\\
0&\bar{\alpha}\\
\end{array}\right).$$
It is straightforward to check that $uf=V(u)$.

Combining the discussion above with Theorem B (and its addendum)
we derive:

\begin{Corollary} The set of transparent $U(2)$-connections modulo gauge
transformations at distance
one from the trivial connection is in 1-1 correspondence with
holomorphic maps $f:M\to \C\mathbb P^1$ up to composition with an
orientation preserving isometry of $\C\mathbb P^1$.
\label{cor:last}
\end{Corollary}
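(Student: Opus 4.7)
The plan is to combine Theorem B, applied to the trivial bundle with $c=0$, with the analysis from Subsection \ref{sub:sol}. Theorem B furnishes a bijection between transparent connections on the trivial bundle modulo gauge and $\mathcal H_0/U(2)$, with associated function $f=u^{*}V(u)+u^{*}cu=u^{*}V(u)$. The lemma and corollary preceding Subsection \ref{sub:sol} show that at distance $\leq 1$ from the trivial connection $f$ automatically lies in $\Omega_0$, and the Addendum to Theorem B gives $\mbox{\rm tr}(f)=\mbox{\rm tr}(c)=0$; hence $f$ descends to a smooth map $M\to\mathfrak{su}(2)$.

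With $f$ regarded as a function on $M$, the PDE (\ref{eq:keypde}) simplifies to $2\star df=[df,f]$, the equation analysed in Subsection \ref{sub:sol}. Its conclusion is that either $f$ is constant with $f^{2}=-\lambda^{2}\mbox{\rm Id}$ (and $\lambda\in\Z$ since $e^{2\pi f}=\mbox{\rm Id}$), or else $f^{2}=-\mbox{\rm Id}$ and $f:M\to\mathbf{S}\cong\mathbb C\mathbb P^1$ is holomorphic. Constant cases with $\lambda^{2}\geq 2$ are ruled out because Lemma \ref{lemma:dist} places the corresponding Levi-Civita ghost at distance $\lambda>1$ from the trivial connection; $\lambda=0$ recovers the trivial connection (distance zero) and is excluded; $\lambda=1$ gives the ghost $[S]$ with $S=(1,-1)$, which is already a constant holomorphic map into $\mathbf{S}$. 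For the converse direction, the explicit $SU(2)$-valued $u=u_{-1}+u_{1}$ built at the end of Subsection \ref{sub:sol} satisfies $f=u^{*}V(u)$, exhibiting any holomorphic $f:M\to\mathbf{S}$ as an element of $\mathcal H_0$, and Theorem B then produces a transparent connection. Since $\deg u=1$, the argument of Lemma \ref{lemma:dist} places the resulting connection at distance exactly one from the trivial connection.

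It remains to identify the residual $U(2)$-action with the orientation-preserving isometries of $\mathbb C\mathbb P^1$. The action $f\mapsto q^{-1}fq$ in Theorem B translates into post-composition of $f:M\to\mathbf{S}$ with the conjugation action of $q$ on $\mathbf S\subset\mathfrak{su}(2)$; since the centre $S^{1}\subset U(2)$ acts trivially, the action descends to $PU(2)\cong SO(3)$, which acts on $\mathbf{S}\cong S^{2}\cong\mathbb C\mathbb P^1$ as the full group of orientation-preserving isometries of the round metric. The main obstacle I anticipate is the careful bookkeeping around distance and constant solutions---ensuring that the Theorem B bijection restricts cleanly to the distance-one slice and that no constant $f$ of higher level slips through---which is settled by Lemma \ref{lemma:dist} together with the explicit low-degree $u$ from Subsection \ref{sub:sol}.
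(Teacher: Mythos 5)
Your proposal is correct and follows the same route as the paper's own one-line derivation (the paper simply records the Corollary as "combining the discussion above with Theorem B and its addendum"). You usefully spell out the bookkeeping the paper leaves implicit --- ruling out the constant solutions of higher level $\lambda\geq 2$ via Lemma \ref{lemma:dist}, noting that $\lambda=0$ gives the trivial connection at distance zero, and identifying the residual $U(2)$-action with $PU(2)\cong SO(3)$ acting as the orientation-preserving isometries of $\mathbf S\cong\mathbb{CP}^1$ --- but the mathematical content is the same.
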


\begin{Remark}{\rm We can actually compute the distance $\d([A],[B])$
where $A$ and $B$ define transparent connections at distance
one from the trivial connection. Let $u=u_{-1}+u_{1}$ solve
$X(u)+Au=0$ and let $w=w_{-1}+w_{1}$ solve $X(w)+Bw=0$.
Then $r=wu^*$ solves $X(r)+Br-rA=0$ or equivalently
$D_{X}^{A}r+(B-A)r=0$. In fact it is easy to check using arguments already
used before that $wqu^*$, where $q\in SU(2)$ is a constant matrix,
are all the solutions of $X(r)+Br-rA=0$. Now observe that
$wqu^*=w_{-1}qu^{*}_{-1}+w_{1}qu_{1}^*+w_{-1}qu_{1}^*+w_{1}qu^{*}_{-1}$.
Thus $wqu^*$ has terms only of degree zero or $\pm 2$. It follows
that $\d([A],[B])=2$ unless $[A]=[B]$. Hence the distance induced
via Corollary \ref{cor:last} on the space of holomorphic maps
$f:M\to\C\mathbb P^1$ (modulo $SU(2)$) is just the discrete distance.

}
\end{Remark}

\subsection{The energy estimates method}
\label{sub:last}
In order to deal with equation (\ref{eq:nlpde})
one may try to use the energy estimates method (the Pestov identity)
in the case of matrix valued functions as done by L.B. Vertgeim \cite{V}, Sharafutdinov \cite{Sha} and Finch and Uhlmann \cite{FU}.
However in order to control the non-linear term given by the bracket in (\ref{eq:nlpde}) one ends up requiring some assumption
of smallness on the connection or its curvature.

In our case the relevant integral identity takes virtually the same form as in the case of complex
valued functions; 
we omit its proof here which is a straightforward generalization of the case $n=1$, which may be found
in the form below in \cite[Lemma 2.1]{SU}. Let $f:SM\to \mathbb M_{n}(\C)$ be a smooth
function, where $\mathbb M_{n}(\C)$ denotes the set of $n\times n$ complex matrices. Then

\[2\int_{SM}\langle H(f),VX(f)\rangle\,d\mu=\int_{SM}|H(f)|^2\,d\mu+\int_{SM}|X(f)|^2\,d\mu-\int_{SM}K|V(f)|^2\,d\mu,\]
where $\langle A,B\rangle=\Re\,\mbox{\rm tr}(AB^*)$ for $A,B\in \mathbb M_{n}$.
If $f$ satisfies equation (\ref{eq:nlpde}), then
\begin{equation}
2\int_{SM}\langle H(f),[X(f),f]\rangle\,d\mu=3\int_{SM}|H(f)|^2\,d\mu+\int_{SM}|X(f)|^2\,d\mu-\int_{SM}K|V(f)|^2\,d\mu.
\label{eq:ee}
\end{equation}
The last equality gives right away that $f$ is constant if $n=1$. Indeed, if $K<0$ the right hand side
of the equality is $\geq 0$ and the left hand side vanishes since the bracket must vanish. This
implies $H(f)=X(f)=V(f)=0$ and thus $f$ is constant. For $n\geq 2$ it is not clear how to deal
with the term in the left hand side for arbitrary $f$. Here is an attempt in the spirit of \cite{FU}.

Using the Cauchy-Schwartz inequality we can estimate the left hand side of (\ref{eq:ee}) by
\[2\int_{SM}\langle H(f),[X(f),f]\rangle\,d\mu\leq 2 \max||f||\int_{SM}(|X(f)|^2+|H(f)|^2)\,d\mu,\]
where $||f||$ is the operator norm of $f$. Hence if $2\max ||f||\leq 1$, (\ref{eq:ee}) gives when
$K<0$ that $H(f)=V(f)=0$ and again $f$ must be constant ($X=-[V,H]$). One can now try to estimate $\max||f||$
using the curvature of $\nabla$ and Remark \ref{rem:curva}.
If for example $K=-1$ we can solve the ODE (\ref{eq:ode}) explicitly as
\[2f(x,v)=\int_{0}^{\infty}e^{-s}F_{B}(X,H)(\phi_{s}(x,v))\,ds+\int_{-\infty}^{0}e^{s}F_{B}(X,H)(\phi_{s}(x,v))\,ds.\]
Hence if the operator norm of $\star F_{\nabla}$ is everywhere $\leq 1/2$, so is
$F_{B}(X,H)$, and then $\max ||f||\leq 1/2$.
By the argument above $f$ must be constant. However, this seems to give a weaker result
than Theorem A.

Finally we note that (\ref{eq:ee}) shows that if $f$ is a solution
of (\ref{eq:nlpde}) which is also {\it odd} (i.e. $f(x,-v)=-f(x,v)$), 
then it must be identically zero. Indeed, in this case $H(f)$, $X(f)$
and $VX(f)$ are even functions, but $[X(f),f]$ is odd. It follows
that $[X(f),f]=0$ and by (\ref{eq:ee}), $f$ must be a constant, and thus
identically zero.


\begin{thebibliography}{aa}



\bibitem{A} C. Anand, {\it Ward's solitons,} Geom. Topol. {\bf 1} (1997) 9--20. 




\bibitem{CS} C.B. Croke, V.A. Sharafutdinov, {\it Spectral
rigidity of a negatively curved manifold,} Topology {\bf 37}
(1998) 1265--1273.

\bibitem{DP} N.S. Dairbekov, G.P. Paternain, {\it Entropy production in Gaussian thermostats,}  Comm. Math. Phys.  {\bf 269}  (2007) 533--543.


\bibitem{DS} N.S. Dairbekov, V.A. Sharafutdinov, {\it Some
problems of integral geometry on Anosov manifolds,} Ergodic Theory 
Dynam. Systems {\bf 23} (2003) 59--74.


\bibitem{E} G. Eskin, {\it On non-abelian Radon transform,} Russ. J. Math. Phys.  {\bf 11}  (2004) 391--408.

\bibitem{FU} D. Finch, G. Uhlmann, {\it The X-ray transform for a non-abelian
connection in two dimensions,} Inverse Problems {\bf 17} (2001) 695--701.









\bibitem{GS} E.R. Goetze, R.J. Spatzier, {\it On Liv\v sic's theorem, superrigidity, and Anosov actions of semisimple Lie groups,}  Duke Math. J.  {\bf 88}  (1997) 1--27.


\bibitem{GK} V. Guillemin, D. Kazhdan, {\it Some inverse spectral
results for negatively curved 2-manifolds,} Topology {\bf 19}
(1980) 301--312.









\bibitem{L1} A.N. Livsic,
{\it Certain properties of the homology of $Y$-systems,}
Mat. Zametki {\bf 10} (1971) 555--564. 


\bibitem{L2} A.N. Livsic, {\it Cohomology of dynamical systems, }
Izv. Akad. Nauk SSSR Ser. Mat.  {\bf 36}  (1972) 1296--1320. 

\bibitem{Ma} L.J. Mason, {\it Global anti-self dual Yang-Mills fields in split signature
and their scattering,} J. Reine Angew. Math. {\bf 597} (2006) 105--133.

\bibitem{Ma1} L.J. Mason, {\it Lecture at Cambridge, November 2007}.

\bibitem{NT} V. Ni\c tic\u a, A. T\"or\"ok, {\it Regularity of the transfer map for cohomologous cocycles,}  Ergodic Theory Dynam. Systems  {\bf 18}  (1998) 1187--1209.


\bibitem{No} R. Novikov, {\it On determination of a gauge field on $\re^d$ from its non-abelian Radon
transform along oriented straight lines}, J. Inst. Math. Jussieu {\bf 1} (2002) 559--629.











\bibitem{Sha} V.A. Sharafutdinov, {\it On an inverse problem of determining a connection on a vector bundle,}
J. Inverse and Ill-Posed Problems {\bf 8} (2000) 51--88.

\bibitem{SU} V.A. Sharafutdinov, G. Uhlmann, {\it On deformation boundary rigidity and spectral rigidity of Riemannian surfaces with no focal points,}  J. Differential Geom.  {\bf 56}  (2000) 93--110.

\bibitem{Wa1} R.S. Ward, {\it Soliton solutions in an integrable chiral model in $2+1$ dimensions,}
  J. Math. Phys.  {\bf 29}  (1988) 386--389.


\bibitem{Wa2} R.S. Ward, {\it Twistors, geometry, and integrable systems,}  The geometric universe (Oxford, 1996)  99--108 Oxford Univ. Press, Oxford, 1998. 



\bibitem{V} L.B. Vertgeim, {\it Integral geometry with a matrix weight, and a nonlinear problem
of recovering matrices,} Sov. Math.-Dokl. {\bf 44} (1992) 132--135.








\end{thebibliography}
\end{document}